\documentclass[ centertags, reqno]{amsart}           
\usepackage{amssymb}
\usepackage{times}
\usepackage[mathcal]{euscript} 
 

\newtheorem{thm}{Theorem}
\newtheorem{lemma}{Lemma}
\newtheorem{cor}{Corollary}
\newtheorem{prop}{Proposition}

\theoremstyle{definition}

\theoremstyle{remark}
\newtheorem{rem}{Remark} 
\newtheorem{ex}{Example}


\newcommand{\mr}{{\mathbb R}}

\newcommand{\mn}{{\mathbb N}}
\newcommand{\mz}{{\mathbb Z}}
\newcommand{\mc}{{\mathbb C}}

\newcommand{\mh}{{\mathbb H}}


\renewcommand{\rho}{\varrho}

\renewcommand{\Im}{\operatorname{Im}}
\renewcommand{\Re}{\operatorname{Re}}

\newcommand{\supp}{\operatorname{supp}}

\newcommand{\hil}{\mathcal{H}}

\newcommand{\num}{\operatorname{Num}}
\newcommand{\dom}{\operatorname{Dom}}
\renewcommand{\ker}{\operatorname{Ker}}

\begin{document}
   
\title[Absence of boundary eigenvalues for n.s.a.  Schr\"odinger operators]{Absence of eigenvalues of non-selfadjoint Schr\"odinger operators on the boundary of their numerical range}

\author[M. Hansmann]{Marcel Hansmann}
\address{Faculty of Mathematics\\ 
Chemnitz University of Technology\\
Chemnitz\\
Germany.}
\email{marcel.hansmann@mathematik.tu-chemnitz.de}

\begin{abstract}
We use a classical result of Hildebrandt to establish simple conditions for the absence of eigenvalues of non-selfadjoint discrete and continuous Schr\"odinger operators on the boundary of their numerical range.
\end{abstract} 

\subjclass[2010]{47A75, 47A12, 35J10, 47B36} 
\keywords{Schr\"odinger operators, non-selfadjoint, complex potentials, eigenvalues, numerical range}   

\maketitle 

\section{Introduction}  
     
The recent interest in spectral properties of non-selfadjoint Schr\"o\-dinger operators has already lead to a variety of new results, both in the continuous and discrete settings. For operators in $L^2(\mr^\nu)$ this includes, e.g., bounds on complex eigenvalues \cite{MR1819914, FLS11, MR2651940,Frank11} and Lieb-Thirring type inequalities \cite{MR2260376, MR2540070, MR2559715, MR2596049, Hansmann11}, and similar results were obtained for discrete Schr\"odinger (and Jacobi) operators in $l^2(\mz^\nu)$ as well \cite{MR2163601, MR2367876, MR2481997, HK10}.
 
In this paper, we will focus on a special class of eigenvalues of non-selfadjoint discrete and continuous Schr\"odinger operators. Namely, we will consider those eigenvalues which are situated on the topological boundary of the numerical range of these operators. As we will show, these eigenvalues are special in the sense that under mild assumptions on the imaginary part of the potential they \textit{cannot} occur.

To indicate the contents of this paper in a little more detail let us consider a Schr\"odinger operator $H=-\Delta+V$ in $L^2(\mr^{\nu})$, with a complex-valued potential $V$ (see Section \ref{sec:continuous} for precise definitions). The numerical range of $H$ is defined as 
$$ \num(H) = \{ \langle Hf,f \rangle: f \in \dom(H), \|f\|=1\}.$$
It is well known that $\num(H)$ is a convex set which, given suitable assumptions on $V$, is contained in a sector in the complex plane. Moreover, the spectrum of $H$ is contained in the closure of the numerical range and so bounds on the numerical range can be used to control the spectrum. We refer to \cite{b_Davies}, Chapter 14.2, for more information on this topic. 
 
The main reason why there will 'usually' be no eigenvalues on the boundary of the numerical range is the fact that these eigenvalues, which in the following we will call \textit{boundary eigenvalues}, do behave like eigenvalues of normal operators. That is, if $\lambda$ is a boundary eigenvalue then
\begin{equation}
  \label{eq:1}
Hf=\lambda f \quad \Leftrightarrow \quad H^*f= \overline{\lambda}f  
\end{equation} 
with the same eigenfunction $f$. In particular, by adding and subtracting these two identities we see that simultaneously $$(-\Delta+\Re(V))f=\Re(\lambda)f \quad \text{and} \quad \Im(V)f=\Im(\lambda)f.$$
This quite restrictive condition will allow  us to prove (using unique continuation)  that boundary eigenvalues can only occur if for some $b \in \mr$ and \textit{every} non-empty open set $\Omega \subset \mr^\nu$ the set 
$\{ x \in \Omega : \Im(V(x))=b\}$  
has positive Lebesgue measure (Theorem \ref{thm:last}). In particular, $\Im(V)$ must be equal to $b$ on a dense subset of $\mr^\nu$ and so, for example, $H$ will have no boundary eigenvalues if $\Im(V)$ is continuous and non-constant. 

For bounded operators, the validity of (\ref{eq:1}) for eigenvalues on the boundary of the numerical range is a classical result of Hildebrandt \cite{MR0200725}. We will see in the next section that his proof, with minor modifications, remains valid in the unbounded case as well. Applications of Hildebrandt's theorem to discrete and continuous Schr\"odinger operators will then be discussed in Section \ref{sec:discrete} and \ref{sec:continuous}, respectively.

\section{Hildebrandt's theorem}\label{sec:Hildebrandt}

Let $Z$ be a closed and densely defined linear operator in a complex separable Hilbert space $(\hil, \langle ., . \rangle)$. We recall that its numerical range $\num(Z)$ (and so its closure $\overline{\num}(Z)$) is a convex set and that, if $\mc \setminus \overline{\num}(Z)$ contains at least one point of the resolvent set of $Z$, then the spectrum of $Z$ (denoted by $\sigma(Z)$)  is contained in $\overline{\num}(Z)$, see \cite{b_Davies}. As  mentioned in the introduction we call an eigenvalue of $Z$ a \textit{boundary eigenvalue} if  it is an element of the topological boundary of the numerical range of $Z$ (denoted by $\partial (\num(Z))$). 
\begin{rem}
In the literature the term boundary eigenvalue is sometimes used with a different meaning. Namely, to denote eigenvalues (of bounded operators) whose absolute value coincides with the spectral radius of the operator. However, usually these eigenvalues are called peripheral eigenvalues.
\end{rem}
An eigenvalue $\lambda$  of $Z$  is called a \textit{normal eigenvalue} if
\begin{equation*}
    \ker(\lambda-Z)=\ker(\overline{\lambda}-Z^*),
\end{equation*}
that is, $f \in \dom(Z)$ and $Zf=\lambda f$ if and only if $f  \in \dom(Z^*)$ and $Z^*f=\overline{\lambda}f$. As indicated above, the analysis of the normal eigenvalues of $Z$ can be reduced to the study of the operators
$$ \Re(Z)=\frac 1 2 (Z+Z^*) \quad \text{and} \quad \Im(Z)=\frac 1 {2i} (Z-Z^*).$$
\begin{rem}
Throughout this article, if not indicated otherwise, the sum of two operators is understood as the usual operator sum with $\dom(Z+Z_0)=\dom(Z) \cap \dom(Z_0)$.
\end{rem}
\begin{prop}\label{prop:1}
Let $f \in \ker(\lambda-Z)\cap \ker(\overline{\lambda}-Z^*)$. Then
$$ \Re(Z)f=\Re(\lambda)f \qquad \text{and} \qquad \Im(Z)f=\Im(\lambda)f.$$
\end{prop}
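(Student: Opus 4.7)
The proposition is essentially a direct computation, so my plan is minimal. The key observation is that the hypothesis $f \in \ker(\lambda - Z) \cap \ker(\overline{\lambda} - Z^*)$ places $f$ simultaneously in $\dom(Z)$ and $\dom(Z^*)$. By the convention stated in the Remark immediately preceding the proposition, the operator sum $Z + Z^*$ and difference $Z - Z^*$ are defined precisely on $\dom(Z) \cap \dom(Z^*)$, so $f$ lies in the domain of both $\Re(Z)$ and $\Im(Z)$. This domain check is the only reason the Remark is needed, and it is the sole point where unboundedness could cause trouble — everything else is algebra.

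Once the domain issue is settled, I would just apply the two operators to $f$. From $Zf = \lambda f$ and $Z^*f = \overline{\lambda} f$ one computes
\[
\Re(Z)f = \tfrac{1}{2}(Z + Z^*)f = \tfrac{1}{2}(\lambda + \overline{\lambda})f = \Re(\lambda)\, f,
\]
and analogously
\[
\Im(Z)f = \tfrac{1}{2i}(Z - Z^*)f = \tfrac{1}{2i}(\lambda - \overline{\lambda})f = \Im(\lambda)\, f.
\]

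There is no real obstacle here; the proposition is a bookkeeping step that isolates the algebraic content of the implication $(\ref{eq:1})$ from the introduction. The substantive work of the paper lies elsewhere — namely, in proving that eigenvalues on $\partial(\num(Z))$ actually are normal (Hildebrandt's theorem, to be extended to the unbounded case in the sequel), and in combining the resulting pointwise identity $\Im(V)f = \Im(\lambda)f$ with a unique continuation argument for Schr\"odinger operators. The present proposition simply records that, once normality of a boundary eigenvalue is in hand, the eigenfunction is automatically a joint eigenfunction of the selfadjoint real and imaginary parts of $Z$.
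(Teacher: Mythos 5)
Your proof is correct and matches the paper's approach, which simply states ``A short calculation.'' You correctly identify the one non-trivial point — that the hypothesis guarantees $f\in\dom(Z)\cap\dom(Z^*)$, so $\Re(Z)f$ and $\Im(Z)f$ are defined — and the rest is the same algebra the paper has in mind.
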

\begin{proof}
  A short calculation.
\end{proof}
\begin{rem} \label{rem:XXX}
The following facts are easily checked:
\begin{enumerate}
\item[(i)] $\num(\Re(Z)), \num(\Im(Z)) \subset \mr$.
\item[(ii)] If $\dom(Z) \subset \dom(Z^*)$ then $Z=\Re(Z)+i\Im(Z)$ and so $$\num(Z) \subset \num(\Re(Z))+i\num(\Im(Z)).$$
\item[(iii)] If $Z$ is a bounded operator then $\num(Z) \subset \{ \lambda : |\lambda| \leq \|Z\|\}$.
\end{enumerate}
\end{rem}
We continue with Hildebrandt's theorem. 
\begin{thm}\label{unbounded}
  Let $Z$ be a densely defined closed  operator in $\hil$ such that $\dom(Z) \subset \dom(Z^*)$ and let $\lambda$ be a boundary eigenvalue of $Z$. Then
  \begin{equation}
    \label{inclusion}
 \ker(\lambda-Z) \subset \ker(\overline{\lambda}-Z^*).    
  \end{equation}
Furthermore, if $\dom(Z)=\dom(Z^*)$ then $\lambda$ is a normal eigenvalue. 
\end{thm}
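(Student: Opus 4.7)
The plan is to adapt Hildebrandt's classical argument by reducing to a canonical position via translation and rotation, and then extracting the conclusion from a Cauchy--Schwarz inequality applied to the positive semi-definite form associated with $\Re(Z)$.

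First, I would normalise the setting. Replacing $Z$ by $Z-\lambda$ moves the boundary eigenvalue to the origin; this preserves all hypotheses since $\dom(Z-\lambda)=\dom(Z)$, $\dom((Z-\lambda)^*)=\dom(Z^*)$, and $\num(Z-\lambda)=\num(Z)-\lambda$, so $0\in\partial(\num(Z-\lambda))$. Because $\num(Z-\lambda)$ is convex and $0$ lies on its boundary, the supporting hyperplane theorem supplies a $\theta\in\mr$ such that $\Re(e^{i\theta}\mu)\ge 0$ for every $\mu\in\num(Z-\lambda)$. Multiplying by $e^{i\theta}$ likewise preserves the domain hypothesis (and the spectral/numerical-range data rotate accordingly), so after these two reductions we may assume $\lambda=0$ and
\[
\Re\langle Zf,f\rangle \;\ge\; 0 \quad\text{for all }f\in\dom(Z).
\]

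Now I would exploit the assumption $\dom(Z)\subset\dom(Z^*)$: the operator $\Re(Z)=(Z+Z^*)/2$ is defined and symmetric on $\dom(Z)$, and the identity $\langle\Re(Z)f,f\rangle=\Re\langle Zf,f\rangle$ shows that $t[f,g]:=\langle\Re(Z)f,g\rangle$ is a positive semi-definite sesquilinear form on $\dom(Z)$. For a boundary eigenvector $f_0\in\ker(Z)$ we get $t[f_0,f_0]=\Re\langle Zf_0,f_0\rangle=0$. The Cauchy--Schwarz inequality for positive semi-definite forms then yields
\[
|t[f_0,g]|^2 \;\le\; t[f_0,f_0]\,t[g,g] \;=\; 0 \quad\text{for every }g\in\dom(Z).
\]
Since $\dom(Z)$ is dense, this forces $\Re(Z)f_0=0$, i.e.\ $(Z+Z^*)f_0=0$, so $Z^*f_0=-Zf_0=0$. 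Undoing the reductions, this is precisely $f_0\in\ker(\overline{\lambda}-Z^*)$, proving \eqref{inclusion}.

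For the second statement, assume $\dom(Z)=\dom(Z^*)$. Then $Z^{**}=Z$ (as $Z$ is closed) and trivially $\dom(Z^*)\subset\dom(Z^{**})$. Moreover $\num(Z^*)=\{\overline{\mu}:\mu\in\num(Z)\}$ (from $\langle Z^*f,f\rangle=\overline{\langle Zf,f\rangle}$), so $\overline{\lambda}$ is a boundary eigenvalue of $Z^*$. Applying the first part to $Z^*$ in place of $Z$ yields $\ker(\overline{\lambda}-Z^*)\subset\ker(\lambda-Z^{**})=\ker(\lambda-Z)$, which together with \eqref{inclusion} makes $\lambda$ a normal eigenvalue.

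The main obstacle is essentially bookkeeping in the unbounded setting: verifying that translation and rotation preserve the crucial inclusion $\dom(Z)\subset\dom(Z^*)$, confirming that $\Re(Z)$ is a bona fide symmetric operator on the correct domain, and checking that the Cauchy--Schwarz step only needs density of $\dom(Z)$ (not any closedness of the form). Once these domain issues are handled, the argument proceeds identically to the bounded case.
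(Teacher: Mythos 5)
Your proof is correct and follows the paper's overall blueprint (translate, rotate via the supporting hyperplane theorem, show the symmetric non-negative part of the normalized operator annihilates the eigenvector, and then handle the case $\dom(Z)=\dom(Z^*)$ by applying the first part to $Z^*$ using $Z^{**}=Z$), but the pivotal lemma is argued by a genuinely different and more elementary route. The paper shows that a densely defined symmetric non-negative operator $A$ with $\langle Af_0,f_0\rangle=0$ satisfies $Af_0=0$ by passing to a non-negative selfadjoint (Friedrichs) extension $B$ and using its operator square root: $0=\langle Bf_0,f_0\rangle=\|\sqrt{B}f_0\|^2$, hence $Af_0=Bf_0=\sqrt{B}\sqrt{B}f_0=0$. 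You instead apply the Cauchy--Schwarz inequality for positive semi-definite Hermitian sesquilinear forms to $t[f,g]=\langle\Re(Z)f,g\rangle$ on $\dom(Z)$: from $t[f_0,f_0]=0$ you get $|t[f_0,g]|^2\le t[f_0,f_0]\,t[g,g]=0$ for all $g\in\dom(Z)$, and density of $\dom(Z)$ then forces $\Re(Z)f_0=0$. This bypasses both the Friedrichs extension theorem and the spectral-theoretic square root, relying only on the purely algebraic Cauchy--Schwarz inequality and density of the domain; the paper's argument, though heavier, has the advantage of packaging the statement as a self-contained lemma about non-negative symmetric operators that is invoked verbatim. (Your choice to rotate into the right half-plane rather than the upper half-plane merely swaps $\Re$ for $\Im$ in the bookkeeping and is immaterial.)
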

\begin{rem}
We should emphasize that the assumption $\dom(Z) \subset \dom(Z^*)$ will really be important in our proof of (\ref{inclusion}), since it assures that every eigenfunction corresponding to a boundary eigenvalue of $Z$ is in the domain of $Z^*$. 
\end{rem}
As mentioned earlier, Hildebrandt \cite{MR0200725} proved this theorem in the case where $Z$ is a bounded operator on $\hil$. On the other hand, the proof of the general case presented below requires only some minor adjustments to the original proof. We start with some preparatory results, most of which are straightforward or well known.
\begin{lemma}\label{lem:num}
Let $\alpha, \beta \in \mc$. Then 
\begin{equation*}
  \num(\alpha Z + \beta) = \alpha \num(Z) + \beta. 
\end{equation*}
Moreover, if $\dom(Z)=\dom(Z^*)$ then
\begin{equation*}
\num(Z^*)=\{ \overline{\lambda}: \lambda \in \num(Z)\}.  
\end{equation*}
\end{lemma}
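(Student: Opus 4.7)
Both identities in the lemma reduce to short computations from the definition of $\num$, so the plan is simply to unpack the inner products and use elementary properties of the sesquilinear form $\langle\cdot,\cdot\rangle$. For the first identity I would start by noting that, under the sum convention from the preceding remark, $\dom(\alpha Z+\beta)=\dom(Z)\cap\hil=\dom(Z)$ (the constant is a bounded operator on all of $\hil$), so the two sets whose equality is asserted are parameterised by the same unit vectors. For any such $f$ one expands
\[
  \langle(\alpha Z+\beta)f,f\rangle=\alpha\langle Zf,f\rangle+\beta\|f\|^{2}=\alpha\langle Zf,f\rangle+\beta,
\]
and letting $f$ range over the unit sphere of $\dom(Z)$ yields both inclusions at once. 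The degenerate case $\alpha=0$ causes no trouble because $Z$ is densely defined, so $\num(Z)$ is nonempty and $0\cdot\num(Z)=\{0\}$.

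For the second identity I would invoke the defining property of the adjoint, $\langle Zf,g\rangle=\langle f,Z^{*}g\rangle$ for $f\in\dom(Z)$ and $g\in\dom(Z^{*})$. Under the hypothesis $\dom(Z)=\dom(Z^{*})$ I can specialise $g=f$ in the common domain, and the conjugate symmetry of the inner product then delivers
\[
  \langle Z^{*}f,f\rangle=\overline{\langle f,Z^{*}f\rangle}=\overline{\langle Zf,f\rangle}.
\]
Varying $f$ over unit vectors in this common domain, both inclusions between $\num(Z^{*})$ and $\{\overline{\lambda}:\lambda\in\num(Z)\}$ follow at once.

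There is essentially no analytic obstacle here; the whole argument is definitional. The one point worth flagging for the reader is precisely \emph{why} the full equality $\dom(Z)=\dom(Z^{*})$ is needed in the second statement rather than merely $\dom(Z)\subset\dom(Z^{*})$: the display above works for every $f\in\dom(Z)$ and therefore always yields the inclusion $\{\overline{\lambda}:\lambda\in\num(Z)\}\subset\num(Z^{*})$, but the reverse inclusion requires that every $f\in\dom(Z^{*})$ also lie in $\dom(Z)$, so that $\langle Zf,f\rangle$ makes sense. This is the only subtlety to watch in an otherwise routine proof.
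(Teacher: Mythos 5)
The paper states this lemma without proof, describing it as among the ``straightforward or well known'' preparatory facts, so there is no competing argument to compare against; your computation is the natural one and it is correct. Both halves reduce to the definitional expansion of the numerical range, you correctly identify that the domain is unchanged under the affine transformation (including the degenerate case $\alpha=0$, where density of $\dom(Z)$ guarantees $\num(Z)\neq\emptyset$), and the adjoint computation $\langle Z^{*}f,f\rangle=\overline{\langle Zf,f\rangle}$ is exactly right. One small imprecision in the closing discussion: the displayed identity needs $f$ to lie in $\dom(Z)\cap\dom(Z^{*})$, not merely in $\dom(Z)$; your subsequent remark about why the full equality of domains is required implicitly corrects this, but the initial phrasing could mislead a reader into thinking the identity is available for all of $\dom(Z)$ unconditionally. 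The observation that $\dom(Z)\subset\dom(Z^{*})$ already yields the one-sided inclusion $\{\overline{\lambda}:\lambda\in\num(Z)\}\subset\num(Z^{*})$ is a nice addition consistent with how the asymmetric hypothesis $\dom(Z)\subset\dom(Z^{*})$ is exploited elsewhere in the paper (Theorem~\ref{unbounded}).
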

\begin{lemma}\label{lem:1} 
 Let $A$ be a densely defined, symmetric, non-negative operator in  $\hil$. 
If $f \in \dom({A})$ and $\langle{A}f,f \rangle=0$, then ${A}f=0$.
\end{lemma}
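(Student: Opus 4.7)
The plan is to exploit the semi-inner-product structure furnished by $A$ and apply a Cauchy--Schwarz-type inequality. More precisely, I would introduce the sesquilinear form
\[
 s : \dom(A) \times \dom(A) \to \mc, \qquad s(g,h) := \langle Ag, h\rangle.
\]
Since $A$ is symmetric, $s$ is hermitian ($s(g,h)=\overline{s(h,g)}$), and since $A$ is non-negative, $s(g,g)=\langle Ag,g\rangle \geq 0$ for every $g \in \dom(A)$. Thus $s$ is a positive semi-definite hermitian form on $\dom(A)$.

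Next I would invoke the Cauchy--Schwarz inequality for such forms, which holds without any definiteness assumption: for all $g,h \in \dom(A)$,
\[
 |s(g,h)|^2 \leq s(g,g)\, s(h,h).
\]
Applying this with $g=f$ and an arbitrary $h \in \dom(A)$, and using the hypothesis $s(f,f)=\langle Af,f\rangle=0$, yields
\[
 |\langle Af, h\rangle|^2 = |s(f,h)|^2 \leq 0 \cdot s(h,h) = 0,
\]
so $\langle Af, h\rangle = 0$ for every $h \in \dom(A)$.

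Finally, since $A$ is densely defined, $\dom(A)$ is dense in $\hil$, and the vanishing of $\langle Af, h\rangle$ on a dense set forces $Af=0$. There is no real obstacle here; the only subtlety worth flagging is that Cauchy--Schwarz must be quoted in its version for merely positive \emph{semi}-definite forms (no strict positivity), which is standard and proved exactly as in the definite case by expanding $s(f+\alpha h, f+\alpha h)\geq 0$ and optimizing over $\alpha \in \mc$.
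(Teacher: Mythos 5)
Your proof is correct and takes a genuinely different route from the paper's. The paper's argument proceeds by passing to a non-negative selfadjoint extension $B$ of $A$ (the Friedrichs extension), using the spectral theorem to produce a non-negative square root $\sqrt{B}$, observing that $\langle Af, f\rangle = \|\sqrt{B}f\|^2 = 0$ forces $\sqrt{B}f = 0$, and concluding $Af = Bf = \sqrt{B}\sqrt{B}f = 0$. Your argument instead works directly with the positive semi-definite hermitian form $s(g,h) = \langle Ag, h\rangle$ on $\dom(A)$, applies the Cauchy--Schwarz inequality in its semi-definite version to conclude that $\langle Af, h\rangle = 0$ for all $h$ in the dense set $\dom(A)$, and finishes by continuity of the inner product. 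Your approach is more elementary: it avoids both the Friedrichs extension theorem and the spectral theorem for unbounded selfadjoint operators, relying only on a purely algebraic inequality plus density. The paper's approach is slightly slicker once one has the extension and square-root machinery at hand, and fits the functional-analytic toolkit the paper is already using, but for this particular lemma your argument is leaner and just as rigorous. One small point worth being explicit about (and you did flag it) is that Cauchy--Schwarz for semi-definite forms does not allow division by $s(h,h)$; the discriminant argument handles the case $s(h,h)=0$ cleanly, which is essential here.
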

\begin{proof}
Let $B$ denote a non-negative, selfadjoint extension of $A$. Then $B$ has a non-negative square root, so we obtain $ 0= \langle Af,f \rangle = \langle Bf, f \rangle = \| \sqrt{B} f \|^2$. This implies $\sqrt{B}f=0$ and so $Af=Bf=\sqrt{B} \sqrt{B} f =0$.
\end{proof}
We will also need what is sometimes known as the \textit{supporting hyperplane theorem}, see \cite{MR1443208} Theorem 2.4.12. 
\begin{thm}
Let $S$ be a convex set in $\mc$ and let $x \in \partial S$. Then there exists a closed half-plane $\mh$ such that $x \in \partial \mh$ and $S \subset \mh$.
\end{thm}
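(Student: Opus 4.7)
The plan is to pass to the closure of $S$, dispose of the degenerate case in which $S$ lies in an affine line, and in the main case obtain the supporting half-plane by projecting exterior approximants of $x$ onto the closed convex set $\overline{S}$ and taking a limit of the resulting normal directions.

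First I would replace $S$ by $\overline{S}$, which is still convex. If $\overline{S}$ has empty interior in $\mc$, then being convex it is contained in an affine line $L$, and either of the two closed half-planes bounded by $L$ serves as $\mh$: it contains $S$ and has $x\in\overline{S}\subset L=\partial\mh$. So I may assume $\overline{S}$ has nonempty interior; the standard fact that the interior of a convex set equals the interior of its closure then gives $x\in\partial S\Rightarrow x\in\partial\overline{S}$, and any half-plane supporting $\overline{S}$ at $x$ will also support $S$.

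Next, since $x$ lies in $\overline{S}$ but not in its interior, I can pick a sequence $z_n\in\mc\setminus\overline{S}$ with $z_n\to x$. For each $n$, a compactness argument (intersect $\overline{S}$ with a closed disc about $z_n$ large enough to meet $\overline{S}$, and minimize the continuous distance function on this compact set) produces a nearest point $y_n\in\overline{S}$ to $z_n$. The standard variational characterization of the projection onto a closed convex set, obtained by differentiating $t\mapsto|z_n-(ty_n+(1-t)z)|^2$ at $t=1$, yields
\begin{equation*}
\Re\bigl(\overline{(z_n-y_n)}\,(z-y_n)\bigr)\leq 0 \quad\text{for every }z\in\overline{S}.
\end{equation*}
Normalizing $u_n:=(z_n-y_n)/|z_n-y_n|$ and extracting a subsequence with $u_n\to u$ (by compactness of the unit circle in $\mc$), and noting that $|y_n-z_n|\leq|x-z_n|\to 0$ since $x\in\overline{S}$, so $y_n\to x$, I can let $n\to\infty$ in the inequality to obtain $\Re(\overline{u}(z-x))\leq 0$ for every $z\in\overline{S}$. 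The closed half-plane $\mh:=\{w\in\mc:\Re(\overline{u}(w-x))\leq 0\}$ then contains $\overline{S}\supset S$ and has $x$ on its bounding line $\{w:\Re(\overline{u}(w-x))=0\}$.

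The main subtlety is that $x$ itself lies in $\overline{S}$, so one cannot directly separate $\{x\}$ from $\overline{S}$ by Hahn--Banach; the role of the exterior approximants $z_n$ and the subsequential limit of the normals $u_n$ is precisely to upgrade a family of strictly separating half-planes into a single supporting one at $x$. The reduction to the closed case and the disposal of the flat case are otherwise routine.
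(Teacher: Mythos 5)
Your proof is correct. Note that the paper does not prove this statement at all --- it is quoted as a known result with a reference (Theorem 2.4.12 of the cited convexity text) --- so there is no in-paper argument to compare against; what you have written is the standard, self-contained proof via metric projection onto the closed convex hull. All the steps check out: the reduction to $\overline{S}$ and the flat case are handled properly (for a convex set in $\mr^2$, empty interior of $S$ is equivalent to empty interior of $\overline{S}$, which justifies the dichotomy and the identity $\mathrm{int}(S)=\mathrm{int}(\overline{S})$ in the non-degenerate case, hence $x\in\partial\overline{S}$); the variational inequality for the nearest point has the right sign; $|z_n-y_n|>0$ so the normalization is legitimate; and the estimate $|y_n-z_n|\le|x-z_n|$ gives $y_n\to x$, so the limit inequality $\Re(\overline{u}(z-x))\le 0$ with $|u|=1$ does produce a genuine supporting half-plane at $x$. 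Your closing remark correctly identifies why one cannot simply invoke strict separation of $\{x\}$ from $\overline{S}$.
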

Now we are prepared for the proof of Theorem \ref{unbounded}: Let  $\lambda \in \partial(\num(Z))$ with $Zf=\lambda f$ for some non-trivial $f \in \dom(Z) \subset \dom(Z^*)$. We need to show that 
\begin{equation}
  \label{eq:AA}
 Z^*f=\overline{\lambda} f. 
\end{equation}

By the supporting hyperplane theorem and Lemma \ref{lem:num} we can find $\theta \in [0,2\pi)$ such that $T:=e^{i\theta}(Z-\lambda)$ satisfies $\num(T)\subset \{ \lambda : \Im(\lambda) \geq 0\}$.  Moreover, we have $Tf=0$. In the following we show that $T^*f=0$, which implies (\ref{eq:AA}).

By construction $\Im(T)$ is densely defined (note that $\dom(\Im(T))$ $=\dom(Z)$), symmetric and non-negative. Since $Tf=0$ we also have
$$  \langle \Im(T) f, f \rangle = \Im( \langle T f, f \rangle ) = 0.$$
So we can apply Lemma \ref{lem:1} to obtain that $\Im(T)f=0$. Since $\Re(T)=T-i\Im(T)$, this implies that $\Re(T)f=0$ as well. Finally, the symmetry of $\Re(T)$ and $\Im(T)$ implies that
\begin{equation*} 
  T^*= (\Re(T)+i\Im(T))^* \supset \Re(T)^*-i\Im(T)^* \supset \Re(T)-i\Im(T).
\end{equation*}
This inclusion shows that $\Re(T)f-i\Im(T)f=T^*f$ and so $T^*f=0$ as desired.

In the preceding part of the proof we have shown (in case $\dom(Z) \subset \dom(Z^*)$) that $\ker(\lambda-Z) \subset \ker(\overline{\lambda}-Z^*)$ if $\lambda$ is a boundary eigenvalue of $Z$.
It remains to show that if $\dom(Z)=\dom(Z^*)$, then also the reverse inclusion is valid and so $\lambda$ is a normal eigenvalue. But in this case Lemma \ref{lem:num} shows that $\overline{\lambda}$ is a boundary eigenvalue of $Z^*$, so  by the first part of the proof $$\ker(\overline{\lambda}-Z^*) \subset \ker(\overline{\overline{\lambda}}-Z^{**}) = \ker(\lambda-Z).$$
This concludes the proof of Theorem \ref{unbounded}.

\section{The discrete Schr\"odinger operator}\label{sec:discrete}
In this section we apply Hildebrandt's theorem to  derive conditions for the absence of boundary eigenvalues for the non-selfadjoint discrete Schr\"o\-dinger operator $J=J_0+D$ on $l^2(\mz^\nu), \nu \geq 1$. Here $J_0$ denotes the discrete Laplacian on $l^2(\mz^\nu)$, i.e.
$$ (J_0 u)(k)=\sum_{l \in \mz^\nu : \: \|l\|_1=1} u(k+l),$$
where $\|l\|_1=\sum_{j=1}^\nu |l_j|$, and $D$ is the operator of multiplication by a bounded function $d : \mz^\nu \to \mc$, i.e. 
$(Du)(k)=d(k)u(k)$. 
\begin{rem}
Recall that $\sigma(J_0)=[-2\nu,2\nu]$ and $\sigma(D)=\overline{\{ d(k) : k \in \mz^\nu \}}$.   
\end{rem}
Since $J_0=J_0^*$ we see that $\Re(J)= J_0 + \Re(D)$ and $\Im(J)=\Im(D)$, where 
$(\Re(D)u)(k)=\Re(d(k))u(k)$ and $(\Im(D)u)(k) = \Im(d(k)) u(k)$. In particular, Remark \ref{rem:XXX} implies that
\begin{equation}
  \num(J) \subset [-2\nu+R_-,2\nu+R_+]+i[I_-,I_+],
\end{equation}
where  $ R_-=\inf \Re(d(k))$, $R_+= \sup \Re(d(k))$, $I_-=\inf \Im(d(k))$ and $I_+=\sup \Im(d(k))$.
\begin{prop}\label{prop:6}
  Let $\lambda$ be a boundary eigenvalue of $J$ with corresponding eigenfunction $u$. Then
  \begin{equation}\label{eq:dd}
 (J_0 + \Re(D))u = \Re(\lambda)u \quad \text{and} \quad \Im(D)u = \Im(\lambda)u.
  \end{equation}
\end{prop}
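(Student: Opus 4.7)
The plan is to apply Hildebrandt's theorem (Theorem \ref{unbounded}) to $Z=J$ and then invoke Proposition \ref{prop:1}. First I would observe that since $J_0$ is a bounded selfadjoint operator on $l^2(\mz^\nu)$ (its norm is bounded by $2\nu$) and $D$ is a bounded multiplication operator (because $d$ is bounded), the sum $J=J_0+D$ is a bounded operator on $l^2(\mz^\nu)$. In particular, $\dom(J)=\dom(J^*)=l^2(\mz^\nu)$, so the hypotheses of Theorem \ref{unbounded} are satisfied in the strongest form.

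Next, since $\lambda$ is by assumption a boundary eigenvalue of $J$, Theorem \ref{unbounded} yields that $\lambda$ is a normal eigenvalue, i.e.\ the eigenfunction $u$ satisfies both $Ju=\lambda u$ and $J^*u=\overline{\lambda} u$. Hence $u \in \ker(\lambda-J)\cap \ker(\overline{\lambda}-J^*)$, and Proposition \ref{prop:1} applies to give $\Re(J)u=\Re(\lambda)u$ and $\Im(J)u=\Im(\lambda)u$.

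Finally I would identify $\Re(J)$ and $\Im(J)$ explicitly. Because $J_0$ is selfadjoint and $D^*$ is the multiplication operator by $\overline{d}$, one has $J^*=J_0+D^*$, and an immediate computation gives $\Re(J)=J_0+\Re(D)$ and $\Im(J)=\Im(D)$, where $\Re(D)$ and $\Im(D)$ are the multiplication operators by $\Re(d)$ and $\Im(d)$ respectively. Substituting these into the identities from Proposition \ref{prop:1} yields exactly (\ref{eq:dd}).

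There is essentially no obstacle here: the proposition is a direct translation of Hildebrandt's theorem to the discrete Schrödinger setting, and the only thing to verify is the triviality that $J$ is bounded so that the domain hypotheses of Theorem \ref{unbounded} are automatic. The substantive content all lives in Section \ref{sec:Hildebrandt}.
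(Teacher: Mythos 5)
Your proof is correct and follows exactly the route the paper takes: the paper's proof of this proposition is simply ``Apply Hildebrandt's theorem and Proposition \ref{prop:1},'' and the text preceding the proposition already records that $J$ is bounded with $\Re(J)=J_0+\Re(D)$ and $\Im(J)=\Im(D)$. You have merely spelled out the same steps in more detail.
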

\begin{proof}
  Apply Hildebrandt's theorem and Proposition \ref{prop:1}.
\end{proof}
The previous proposition provides a first condition for the absence of boundary eigenvalues. We will use the fact that the eigenvalues of the operator $\Im(D)$ are given by $\Im(d(k)), k \in \mz^\nu$.
\begin{cor}\label{cor:9}
  (i) If $a \in \mr$ is not an eigenvalue of $J_0+\Re(D)$, then $J$ has no boundary eigenvalues with real part $a$.\\
  (ii) Let $b \in \mr$. If $\Im(d(k)) \neq b$ for all $k \in \mz^\nu$, then $J$ has no boundary eigenvalues with imaginary part $b$.
\end{cor}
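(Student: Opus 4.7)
The plan is to derive both parts of the corollary as immediate consequences of Proposition \ref{prop:6}, which already packages the content of Hildebrandt's theorem into the identities needed here. The strategy in each case is to assume by contradiction that a boundary eigenvalue $\lambda$ with the prescribed real or imaginary part exists, apply (\ref{eq:dd}) to the corresponding nontrivial eigenfunction $u$, and extract a contradiction with the hypothesis on $a$ or on the values $\Im(d(k))$.

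For part (i), suppose for contradiction that $\lambda$ is a boundary eigenvalue of $J$ with $\Re(\lambda)=a$ and corresponding eigenfunction $u \in l^2(\mz^\nu)$, $u \neq 0$. Proposition \ref{prop:6} gives $(J_0+\Re(D))u=\Re(\lambda)u=au$, so $a$ is an eigenvalue of the selfadjoint operator $J_0+\Re(D)$ with eigenvector $u$, contradicting the hypothesis.

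For part (ii), suppose $\lambda$ is a boundary eigenvalue of $J$ with $\Im(\lambda)=b$ and nontrivial eigenfunction $u$. Proposition \ref{prop:6} yields $\Im(D)u=bu$; since $\Im(D)$ acts by pointwise multiplication, this reads $(\Im(d(k))-b)u(k)=0$ for every $k \in \mz^\nu$. Under the assumption that $\Im(d(k)) \neq b$ for all $k$, the factor $\Im(d(k))-b$ never vanishes, forcing $u(k)=0$ for all $k$ and hence $u=0$, a contradiction.

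There is no genuine obstacle in either argument; the real work has been done in Hildebrandt's theorem and its repackaging as Proposition \ref{prop:6}. The only point that needs a moment of care is the observation in part (ii) that $\Im(D)$ is a bounded multiplication operator whose point spectrum consists exactly of the values $\Im(d(k))$ (plus possibly limit points that are not attained, which are irrelevant here since the eigenvalue equation $\Im(D)u=bu$ already forces $b$ to be attained on $\supp(u)$).
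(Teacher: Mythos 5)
Your proposal is correct and follows exactly the route the paper intends: the paper gives no separate proof of the corollary, stating only that it follows from Proposition \ref{prop:6} together with the observation that the eigenvalues of $\Im(D)$ are precisely the values $\Im(d(k))$, $k\in\mz^\nu$, and your argument simply unwinds these two remarks into an explicit contradiction in each part. The explicit pointwise reading $(\Im(d(k))-b)u(k)=0$ in part (ii) is exactly the content of the paper's remark about the point spectrum of $\Im(D)$, so there is no divergence from the paper's reasoning.
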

\begin{ex}
The spectrum of $J_0$ is purely absolutely continuous, so Part (i) of the previous corollary implies that the operator $J_0+i\Im(D)$, with a purely imaginary potential, does not have any boundary eigenvalues.
\end{ex}
\begin{rem}
If $\Im(d)$ has a fixed sign (i.e. $\Im(d): \mz^\nu \to \mr_\pm$), then the numerical range of $J$ is contained in $\{ \lambda : \pm \Im(\lambda) \geq 0 \}$. So in this case all real eigenvalues (i.e. eigenvalues in $\mr$) are boundary eigenvalues and the above results, and the results to follow, provide conditions for the absence of such eigenvalues. 
\end{rem}
Corollary \ref{cor:9} can be improved considerably using the following two lemmas. The first one is obvious.
\begin{lemma}\label{jacobi}
Let  $b \in \mr$ and $u \in l^2(\mz^\nu)$. Then $\Im(D)u=b u$ if and only if
$\Im(d(k)) = b$ for all $k \in \supp(u):=\{k \in \mz^\nu : u(k) \neq 0\}$.
\end{lemma}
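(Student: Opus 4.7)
The plan is to simply unfold the definitions pointwise; there is no real content beyond that, which is why the author labels the lemma obvious. The key observation is that $\Im(D)$ is a multiplication operator, so the equation $\Im(D)u=bu$ decouples into the family of scalar equations
\begin{equation*}
(\Im(d(k))-b)\,u(k)=0,\qquad k\in\mz^\nu.
\end{equation*}
Each such scalar equation holds if and only if at least one of its factors vanishes, i.e.\ either $u(k)=0$ (meaning $k\notin\supp(u)$) or $\Im(d(k))=b$.

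First I would recall that the action of $\Im(D)$ is given by $(\Im(D)u)(k)=\Im(d(k))u(k)$, which is the real part reduction of the defining formula $(Du)(k)=d(k)u(k)$ of the multiplication operator $D$, using $\Im(D)=\tfrac{1}{2i}(D-D^*)$ together with $(D^*u)(k)=\overline{d(k)}u(k)$. Then I would observe that the vector equation $\Im(D)u=bu$ in $l^2(\mz^\nu)$ is equivalent, coordinatewise, to $(\Im(d(k))-b)u(k)=0$ for every $k\in\mz^\nu$.

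For the forward direction, if this holds and $k\in\supp(u)$, then $u(k)\neq 0$, so $\Im(d(k))=b$. For the converse, if $\Im(d(k))=b$ for all $k\in\supp(u)$, then either $k\in\supp(u)$ and the factor $\Im(d(k))-b$ vanishes, or $k\notin\supp(u)$ and $u(k)=0$; in both cases the coordinatewise equation holds, and therefore $\Im(D)u=bu$.

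There is no genuine obstacle here: the argument is a one-line pointwise inspection, and integrability ($u\in l^2$) plays no role since the condition is purely algebraic for each fixed $k$. The only small point worth being explicit about is that $\Im(D)$ has domain all of $l^2(\mz^\nu)$ because $d$ is bounded, so no domain subtleties arise.
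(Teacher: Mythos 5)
Your proof is correct and does exactly what the paper has in mind: the paper simply declares this lemma ``obvious'' and gives no proof, and your coordinatewise unfolding of the multiplication-operator equation $(\Im(d(k))-b)u(k)=0$ is precisely the routine verification being waved away. Nothing more needs to be said.
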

The next lemma shows that the support of an eigenfunction of $J$ must be infinite in 'all' directions.
\begin{rem}   
  Let us agree that throughout this section $k_j$ will denote the $j$th component of $k \in \mz^\nu$. 
\end{rem}
\begin{lemma} \label{newlemma}
 Let $u$ be an eigenfunction of $J$. Then for every $j\in \{1,\ldots,\nu\}$ 
 \begin{equation*}
   \sup \{ k_j : k \in \supp(u) \} = \infty
 \end{equation*}
and
 \begin{equation*}
   \inf \{ k_j : k \in \supp(u) \} = -\infty.
 \end{equation*}
\end{lemma}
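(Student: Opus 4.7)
My approach is by contradiction, exploiting the local recurrence that any eigenfunction of $J$ must satisfy. Write $Ju=\lambda u$, so that for every $k \in \mz^\nu$
\begin{equation*}
\sum_{l \in \mz^\nu:\,\|l\|_1=1} u(k+l)=(\lambda-d(k))\,u(k).
\end{equation*}
It suffices to treat the assertion about the supremum; the infimum case is identical after replacing the unit vector $e_j$ below by $-e_j$. Assume for contradiction that $M:=\sup\{k_j : k \in \supp(u)\}$ is finite. Since $k_j$ is integer-valued on $\supp(u)$, this supremum is actually attained, so there exists some $k^0 \in \supp(u)$ with $k^0_j=M$.

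The plan is then to apply the displayed recurrence at the shifted point $k=k^0+e_j$, where $e_j$ denotes the $j$-th standard basis vector of $\mz^\nu$. The right-hand side vanishes, since the $j$-th coordinate of $k^0+e_j$ equals $M+1>M$, hence $u(k^0+e_j)=0$. On the left-hand side, the $2\nu$ neighbours in the sum are the points $k^0+e_j+l$ with $\|l\|_1=1$. Exactly one of them, corresponding to $l=-e_j$, is the point $k^0$; every other choice of $l$ yields a neighbour whose $j$-th coordinate is at least $M+1$, and thus by the definition of $M$ the value of $u$ there is zero. The recurrence therefore collapses to $u(k^0)=0$, contradicting $k^0 \in \supp(u)$.

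The argument is essentially forced and I do not foresee any genuine obstacle; the one point worth highlighting is the observation that the supremum of an integer-valued function on a non-empty set is attained as soon as it is finite, which is what allows the contradiction to be anchored at a concrete point $k^0$ of $\supp(u)$. Note that neither the $l^2$-condition on $u$ nor any further hypothesis on $d$ is used; only the nearest-neighbour structure of the discrete Laplacian $J_0$ enters the proof.
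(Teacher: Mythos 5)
Your proof is correct and takes essentially the same approach as the paper's: both assume the supremum $M$ is finite (hence attained, since $\supp(u)$ is a non-empty set of integers bounded above in the $j$-th coordinate), evaluate the eigenvalue equation one step beyond at the point $k^0+e_j$, observe that every term vanishes except $u(k^0)$, and so derive $u(k^0)=0$, a contradiction. The only difference is cosmetic notation --- the paper writes the coordinates as $(M+1,l)$ with $l\in\mz^{\nu-1}$, whereas you use the shift $k^0+e_j$.
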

\begin{proof}
We only show that $\sup \{ k_1 : k \in \supp(u) \} = \infty$ (all other cases can be proved in exactly the same way). To this end, let us suppose that this supremum is finite and let us set 
$$ M=    \max \{ k_1 : k \in \supp(u) \} \in \mz.$$
In other words, there exists $l \in \mz^{\nu-1}$ such that $u(M,l)\neq 0$ and  for every $n \in \mn$ and every $l'\in \mz^{\nu-1}$ we have $u(M+n,l')=0$. Now let $\lambda$ denote an eigenvalue corresponding to $u$. Then we can evaluate the identity $(Ju)(k)=\lambda u(k)$ at $k=(M+1,l)$ to obtain
$$ \lambda u(M+1,l)=u(M,l)+u(M+2,l)+ \sum_{\|l'-l\|_1=1} u (M+1,l').$$
But here all terms apart from $u(M,l)$ are zero by definition of $M$, so the equation can be satisfied only if $u(M,l)=0$ as well. This leads to a contradiction.
\end{proof}
\begin{thm}\label{thm:new}
Suppose that for some $j \in \{1,\ldots,\nu\}$ one of the following conditions is satisfied:
 \begin{eqnarray*}
\mbox{(i)} \quad &   \sup \{ k_j : \Im(d(k))=b \} < \infty, \\
\mbox{(ii)} \quad &   \inf \{ k_j : \Im(d(k))=b \} > -\infty. 
 \end{eqnarray*}
Then $J$ has no boundary eigenvalues with imaginary part $b$.
\end{thm}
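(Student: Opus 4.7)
The plan is to argue by contradiction, chaining together the three preceding results. Assume $\lambda$ is a boundary eigenvalue of $J$ with $\Im(\lambda)=b$ and nontrivial eigenfunction $u \in l^2(\mz^\nu)$. My goal is to show $\supp(u)$ must be bounded (in the $j$th coordinate) on one side, which is incompatible with Lemma \ref{newlemma}.

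The first step is to apply Proposition \ref{prop:6}, which under the boundary-eigenvalue hypothesis yields the two pointwise identities; the one I need is $\Im(D)u = bu$. Next I would invoke Lemma \ref{jacobi} to translate this into the set-theoretic statement
\begin{equation*}
\supp(u) \subset \{ k \in \mz^\nu : \Im(d(k)) = b \}.
\end{equation*}

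With this inclusion in hand, the final step is immediate. Under hypothesis (i) for a given $j$,
\begin{equation*}
\sup \{ k_j : k \in \supp(u) \} \leq \sup \{ k_j : \Im(d(k)) = b \} < \infty,
\end{equation*}
which directly contradicts the first conclusion of Lemma \ref{newlemma}. Under hypothesis (ii) the same reasoning gives $\inf \{ k_j : k \in \supp(u) \} > -\infty$, contradicting the second conclusion of Lemma \ref{newlemma}. Either way, no boundary eigenvalue with imaginary part $b$ can exist.

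Since every ingredient (Hildebrandt's theorem via Proposition \ref{prop:6}, the pointwise characterization in Lemma \ref{jacobi}, and the two-sided infiniteness of $\supp(u)$ in Lemma \ref{newlemma}) is already in place, there is no real obstacle; the theorem is essentially a direct corollary, and the only thing to be careful about is recording the containment $\supp(u) \subset \{\Im(d)=b\}$ explicitly before taking suprema/infima over the $j$th coordinate.
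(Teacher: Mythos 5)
Your proof is correct and follows exactly the paper's own argument: Proposition \ref{prop:6} plus Lemma \ref{jacobi} to get $\supp(u)\subset\{k:\Im(d(k))=b\}$, then Lemma \ref{newlemma} to derive the contradiction. The only difference is that you spell out the suprema/infima comparison, which the paper leaves implicit.
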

\begin{rem}
Note that $\sup \emptyset = -\infty$ and $\inf \emptyset = \infty$, so Corollary \ref{cor:9}, Part (ii), is a (very) special case of Theorem \ref{thm:new}.
\end{rem}
\begin{proof} 
  Suppose there exists a boundary eigenvalue $\lambda=a+ib$ with corresponding eigenfunction $u$. Then by Proposition \ref{prop:6} and Lemma \ref{jacobi} we have $\Im(d(k))=b$ for all $k \in \supp(u)$. Now apply Lemma \ref{newlemma} to derive a contradiction.
\end{proof}
While Theorem \ref{thm:new} shows that boundary eigenvalues can exist only under very special circumstances, such circumstances can of course occur. For instance, it is easy to provide examples of $J$ having boundary eigenvalues if $\Im(d)$ is constant on $\mz^\nu$ (since in this case $J$ is just a selfadjoint Schr\"odinger operator shifted by a complex constant). The next example shows that boundary eigenvalues can also occur if $\Im(d)$ is non-constant.
\begin{ex}\label{ex:counter}
Choose $\Re(d)$ such that $J_0+\Re(D)$ has an eigenvalue $a \in \mr$ whose corresponding eigenfunction $u$ has at least one zero (this is always possible by starting with $u$ and constructing $\Re(d)$ appropriately). For $b > 0$ define $\Im(d)$ as  
$$ \Im(d(k)) = \left\{ 
  \begin{array}{cl}
    b & , \text{ if } k \in \supp(u) \\
    0 & , \text{ if } k \notin \supp(u).
  \end{array}\right. $$
Then $\Im(D)u=bu$ and so $a+ib$ is an eigenvalue of $J$. Since the numerical range of $J$ is contained in $\{ \lambda : 0 \leq \Im(\lambda) \leq b \}$ this eigenvalue is a boundary eigenvalue. 
\end{ex}   
We continue with two corollaries of Theorem \ref{thm:new} which provide simple conditions for the absence of non-real boundary eigenvalues.
\begin{cor}\label{cor:X}
  Suppose that for some $j \in \{1,\ldots,\nu\}$ and $n \to \infty$ (or $-\infty$) we have
  \begin{equation}
    \label{eq:xy}
 \sup |\Im(d(k_1,\ldots,k_{j-1},n,k_{j+1},\ldots,k_\nu))| \to 0,     
  \end{equation}
the supremum being taken over all $(k_1,\ldots,k_{j-1},k_{j+1},\ldots,k_\nu) \in \mz^{\nu-1}$.
Then any boundary eigenvalue of $J$ must be real.
\end{cor}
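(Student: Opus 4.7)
The plan is to reduce Corollary \ref{cor:X} directly to Theorem \ref{thm:new}. Suppose, for contradiction, that $\lambda = a + ib$ is a boundary eigenvalue of $J$ with $b \neq 0$. I would then show that the hypothesis (\ref{eq:xy}) forces the set $S_b := \{ k \in \mz^\nu : \Im(d(k)) = b\}$ to have bounded $j$-th coordinate in the appropriate direction, which by Theorem \ref{thm:new} rules out the existence of such a boundary eigenvalue.

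Concretely, assume for definiteness that the condition holds as $n \to \infty$ (the case $n \to -\infty$ being completely analogous). Since $b \neq 0$, apply (\ref{eq:xy}) with $\eps = |b|/2 > 0$: there exists $N \in \mn$ such that for all $n \geq N$,
\begin{equation*}
 \sup_{(k_1,\ldots,k_{j-1},k_{j+1},\ldots,k_\nu) \in \mz^{\nu-1}} |\Im(d(k_1,\ldots,k_{j-1},n,k_{j+1},\ldots,k_\nu))| < |b|/2.
\end{equation*}
In particular, $\Im(d(k)) \neq b$ for every $k \in \mz^\nu$ with $k_j \geq N$. This is exactly the statement that $\sup \{ k_j : \Im(d(k)) = b \} \leq N - 1 < \infty$, so condition (i) of Theorem \ref{thm:new} is fulfilled. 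That theorem then asserts that $J$ has no boundary eigenvalue with imaginary part $b$, contradicting our assumption. Therefore every boundary eigenvalue must have imaginary part zero, i.e.\ must be real.

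Since the entire argument is a one-step reduction to Theorem \ref{thm:new} via an elementary $\eps$-argument, there is essentially no technical obstacle; the only point requiring a modicum of care is keeping track of the two separate cases $n \to +\infty$ and $n \to -\infty$ and noting that either one on its own suffices to verify one of the two alternatives (i) or (ii) in Theorem \ref{thm:new}.
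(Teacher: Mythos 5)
Your proof is correct and follows essentially the same route as the paper: an elementary $\eps$-argument showing that (\ref{eq:xy}) forces $\sup\{k_j : \Im(d(k))=b\} < \infty$ (or the infimum is $> -\infty$) for every $b \neq 0$, followed by an appeal to Theorem \ref{thm:new}. The only cosmetic differences are your use of $|b|/2$ instead of $|b|$ and the framing as a proof by contradiction rather than a direct argument over all nonzero $b$.
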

\begin{rem}
If $\nu =1$ then (\ref{eq:xy}) has to be understood as $ \Im(d(n)) \to 0$ for $n \to \infty$ (or $-\infty$). 
\end{rem}
\begin{proof}[Proof of Corollary \ref{cor:X}]
  We only consider the case $j=1$ and $n \to \infty$. By assumption, for every  $b \in \mr \setminus \{0\}$ we can find $n_b$ such that for all $n > n_b$ 
$$ \sup |\Im(d(n,k_2,\ldots,k_\nu))| < |b|,$$
the supremum being taken as above. In particular, this shows that $$\sup \{k_1 : \Im(d(k))=b\} \leq n_b$$ and we can apply Theorem \ref{thm:new} to conclude that there are no boundary eigenvalues with imaginary part $b$.
\end{proof}
Let us also state the following special case of Corollary \ref{cor:X}.
\begin{cor} \label{cor:XX}
If $\Im(d(k)) \to 0$ for $\|k\|_1 \to \infty$, then any boundary eigenvalue of $J$ must be real.
\end{cor}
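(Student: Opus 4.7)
The plan is to deduce this directly from Corollary \ref{cor:X} by verifying that its hypothesis (\ref{eq:xy}) is implied (for every $j$, and for both $n \to \infty$ and $n \to -\infty$) by the uniform decay assumption $\Im(d(k)) \to 0$ as $\|k\|_1 \to \infty$.

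The key observation I would use is the trivial inequality $\|k\|_1 \geq |k_j|$ for every coordinate $j \in \{1,\ldots,\nu\}$. Given $\eps > 0$, the decay hypothesis yields some $N$ with $|\Im(d(k))| < \eps$ whenever $\|k\|_1 > N$. Now fix any $j$, and take $|n| > N$. Then for \emph{arbitrary} $(k_1,\ldots,k_{j-1},k_{j+1},\ldots,k_\nu) \in \mz^{\nu-1}$, the point $k=(k_1,\ldots,k_{j-1},n,k_{j+1},\ldots,k_\nu)$ has $\|k\|_1 \geq |n| > N$, so $|\Im(d(k))| < \eps$. Taking the supremum over the remaining coordinates still gives a value $\leq \eps$. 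Since $\eps > 0$ was arbitrary, this shows that
\begin{equation*}
 \sup |\Im(d(k_1,\ldots,k_{j-1},n,k_{j+1},\ldots,k_\nu))| \to 0 \quad \text{as } n \to \pm\infty,
\end{equation*}
i.e.\ condition (\ref{eq:xy}) of Corollary \ref{cor:X} holds.

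An appeal to Corollary \ref{cor:X} then yields that any boundary eigenvalue of $J$ is real, which is exactly the desired conclusion. There is essentially no obstacle in the argument; the only conceptual point is recognizing that the $\ell^1$-norm dominates every single coordinate, which is precisely what lets us control the supremum over the complementary $\nu-1$ directions without any further hypothesis on $\Im(d)$.
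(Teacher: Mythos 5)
Your proof is correct and follows exactly the route the paper intends: the paper simply states Corollary \ref{cor:XX} as a special case of Corollary \ref{cor:X} without further comment, and your verification via $\|k\|_1 \geq |k_j|$ is precisely the (omitted) reduction. Nothing to add.
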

\begin{rem}\label{rem:app}
  In many applications one is interested in potentials $d$ with $\lim_{\|k\|_1 \to \infty} d(k)=0$. In this case $J$ is a compact perturbation of $J_0$ and so Weyl's theorem implies that the spectrum of $J$ consists of $[-2\nu,2\nu]$ (the essential spectrum) and a possible set of isolated eigenvalues which can accumulate at $[-2\nu,2\nu]$ only. From Corollary \ref{cor:XX} we now know that none of the non-real eigenvalues of $J$ will be a boundary eigenvalue. 
\end{rem}

In the remaining part of this section we restrict ourselves to the one-dimensional case, i.e.
\begin{equation}
  \label{eq:100}
  (Ju)(n)=u(n-1)+d(n)u(n)+u(n+1), \quad n \in \mz, \quad u \in l^2(\mz).
\end{equation} 
Note that in this case a solution $u$ of the equation $Ju=\lambda u$ is uniquely determined by its values on two consecutive integers $m$ and $m+1$. In particular, if $u(m+1)=u(m)=0$, then $u$ must be identically zero (this fact is sometimes referred to as \textit{unique continuation principle}). As compared to the higher-dimensional case (where non-zero eigenfunctions might vanish on arbitrarily large connected components) these facts will allow us to strengthen our results on the boundary eigenvalues of $J$ considerably. For instance, the next theorem shows that all boundary eigenvalues will have the same imaginary part and that boundary eigenvalues can exist only if the imaginary part of the potential is of a very special form.

\begin{thm}\label{imagin}($\nu=1$) 
If $J$ has a boundary eigenvalue with imaginary part $b$, then 
\begin{enumerate}
\item[(i)] for every $n \in \mz$ we have $$\{ \Im(d(n)), \Im(d(n+1))\} \cap \{b\} \neq \emptyset,$$ 
\item[(ii)] all boundary eigenvalues of $J$ will have imaginary part $b$.
\end{enumerate}
\end{thm}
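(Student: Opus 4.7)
The plan is to prove (i) as an immediate consequence of Proposition \ref{prop:6}, Lemma \ref{jacobi} and the unique continuation principle mentioned just before the theorem, and then to prove (ii) by contradiction using a combinatorial argument that forces $\supp(u)$ and $\supp(u')$ to coincide with the even and odd integers, followed by an $l^2$-estimate.

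For (i), let $\lambda=a+ib$ be a boundary eigenvalue with eigenfunction $u$. Proposition \ref{prop:6} gives $\Im(D)u=bu$ and $(J_0+\Re(D))u=au$. Lemma \ref{jacobi} then yields $\Im(d(k))=b$ for every $k\in\supp(u)$. Since $u$ satisfies the real three-term recurrence $(J_0+\Re(D))u=au$, unique continuation forces $u(n)=u(n+1)=0$ to imply $u\equiv 0$. Thus for every $n\in\mz$ at least one of $n,n+1$ lies in $\supp(u)$, and so at least one of $\Im(d(n)), \Im(d(n+1))$ equals $b$. That is (i).

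For (ii), I would assume for contradiction that there is a further boundary eigenvalue $\lambda'=a'+ib'$ with $b'\neq b$ and eigenfunction $u'$. The same argument gives $\Im(d(k))=b'$ on $\supp(u')$, so $\supp(u)\cap\supp(u')=\emptyset$. Neither $u$ nor $u'$ has two consecutive zeros. If there were $n$ with $u(n)=u'(n)=0$, then $u(n+1)\neq 0$ and $u'(n+1)\neq 0$, contradicting the disjointness. Hence $\supp(u)\cup\supp(u')=\mz$, and $\supp(u')$ is precisely the complement of $\supp(u)$. Now the condition that $u$ has no two consecutive zeros translates into the statement that $\supp(u')$ contains no two consecutive integers, and symmetrically for $\supp(u)$. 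The only way to partition $\mz$ into two sets each containing no two consecutive integers is the even/odd splitting, so after swapping $u,u'$ if necessary we may assume $\supp(u)=2\mz$ and $\supp(u')=2\mz+1$.

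With this structure in place, I would evaluate $(Ju)(n)=\lambda u(n)$ at an odd index $n=2k+1$, where $u(n)=0$, to obtain $u(2k)+u(2k+2)=0$. Iterating, $u(2k)=(-1)^k u(0)$ for every $k\in\mz$, and since $0\in\supp(u)$ we have $u(0)\neq 0$. Then $\sum_{k\in\mz}|u(2k)|^2=\infty$, contradicting $u\in l^2(\mz)$. The main obstacle I expect is the combinatorial partition step, where one must verify carefully that the two properties (disjoint union equals $\mz$, neither side contains consecutive integers) pin down the even/odd splitting; the rest is a direct computation with the recurrence.
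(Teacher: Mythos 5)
Your proof is correct and mirrors the paper's argument: part (i) is identical, and part (ii) arrives at the same even/odd support structure and the same $l^2$-divergence contradiction. The only cosmetic difference is that the paper factors the step ``$\Im(d)$ must alternate between $b$ and $b'$, hence no boundary eigenvalue'' through a standalone lemma, whereas you reason directly about $\supp(u)$ and $\supp(u')$ and inline that lemma's $l^2$ estimate.
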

An immediate corollary of this theorem and Corollary \ref{cor:X} is the following result.
\begin{cor}\label{cor:imagin}($\nu=1$)
Suppose that $\Im(d(n)) \to 0$ for $n \to + \infty (\text{or} -\infty)$ and that there exists $m \in \mz$ with $$\Im(d(m)) \neq 0 \quad \text{and} \quad  \Im(d(m+1)) \neq 0.$$
Then $J$ has no boundary eigenvalues.
\end{cor}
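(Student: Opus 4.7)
The plan is to combine Corollary \ref{cor:X} (in the one-dimensional form noted in the remark following it) with Theorem \ref{imagin}(i) to derive a direct contradiction, so no actual computation is required.

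First, I would invoke Corollary \ref{cor:X} with $\nu=1$. By the hypothesis $\Im(d(n)) \to 0$ as $n \to +\infty$ (or $-\infty$), the corollary applies and tells us that any boundary eigenvalue of $J$ must be \emph{real}. In particular, if a boundary eigenvalue exists, its imaginary part must be $b=0$.

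Next, I would apply Theorem \ref{imagin}(i) with this $b=0$: assuming for contradiction that such a boundary eigenvalue exists, we get that for every $n \in \mz$,
\begin{equation*}
\{\Im(d(n)),\,\Im(d(n+1))\}\cap\{0\}\neq\emptyset,
\end{equation*}
i.e.\ at least one of $\Im(d(n))$, $\Im(d(n+1))$ vanishes. But specializing to $n=m$, the assumption $\Im(d(m))\neq 0$ and $\Im(d(m+1))\neq 0$ contradicts this. Hence no boundary eigenvalue can exist.

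There is no genuine obstacle here — the corollary is essentially an immediate synthesis of the two previous results. The only thing to be slightly careful about is the logical flow: one must first use Corollary \ref{cor:X} to force $b=0$ (otherwise Theorem \ref{imagin}(i) alone, applied with an arbitrary $b$, would not yield a contradiction from the hypothesis about $m$, which specifically concerns the value $0$). Once $b$ is pinned down to $0$, the local obstruction at $n=m$ finishes the argument.
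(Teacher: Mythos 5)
Your proof is correct and is exactly the synthesis the paper intends (the paper itself introduces this corollary as ``an immediate corollary of this theorem and Corollary \ref{cor:X}''): first use Corollary \ref{cor:X} to force any boundary eigenvalue to be real, then apply Theorem \ref{imagin}(i) with $b=0$ at $n=m$ to reach a contradiction. The logical ordering you flag — pin down $b=0$ before invoking Theorem \ref{imagin}(i) — is indeed the one point worth noting, and you handle it correctly.
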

The next lemma will be needed in the proof of Theorem \ref{imagin}.
\begin{lemma}($\nu=1$)
Let $(\Im(d(n)))_{n \in \mz}$ be of the form
\begin{equation}
  \label{eq:N}
 (\ldots,b_1,b_2,b_1,b_2,b_1,b_2,\ldots)
\end{equation} 
for some $b_1 \neq b_2$. Then $J$ has no boundary eigenvalues.
\end{lemma}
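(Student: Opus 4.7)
The plan is to argue by contradiction: assume $J$ has a boundary eigenvalue $\lambda = a+ib$ with eigenfunction $u \in l^2(\mz)$, and derive that $u \equiv 0$, contradicting the fact that $u$ is an eigenfunction. The key observation is that the alternating structure of $\Im(d)$, combined with the conclusion of Proposition \ref{prop:6}, forces $u$ to vanish on every other integer, and then the three-term recursion propagates this to the whole lattice.

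First I would invoke Proposition \ref{prop:6} to conclude that $\Im(D)u = b u$, and then Lemma \ref{jacobi} to get $\Im(d(n)) = b$ for every $n \in \supp(u)$. Since the sequence $(\Im(d(n)))$ takes only the two distinct values $b_1, b_2$, and $u \not\equiv 0$ forces $\supp(u) \neq \emptyset$, we must have $b \in \{b_1, b_2\}$. Without loss of generality (by the symmetry of the alternating pattern) assume $b = b_1$, and let
\begin{equation*}
A = \{ n \in \mz : \Im(d(n)) = b_2\}.
\end{equation*}
Because the values alternate, $A$ is either $2\mz$ or $2\mz + 1$, and $u(n) = 0$ for every $n \in A$.

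Next I would plug this information into the eigenvalue equation. For each $n \in A$, evaluating $(Ju)(n) = \lambda u(n)$ yields $u(n-1) + u(n+1) = (\lambda - d(n)) u(n) = 0$, i.e.\ $u(n+1) = - u(n-1)$. Since $n-1$ and $n+1$ belong to the complementary coset $\mz \setminus A$, iterating this two-step recursion shows that $|u(m)|$ is constant as $m$ ranges over $\mz \setminus A$ (only the sign alternates). The $l^2$ condition then forces $u(m) = 0$ on $\mz \setminus A$ as well, so $u \equiv 0$, contradicting the fact that $u$ is an eigenfunction.

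I do not expect a substantial obstacle here: the only point requiring a small amount of care is the verification that $b$ must coincide with one of the two values $b_1, b_2$ (else Lemma \ref{jacobi} forces $\supp(u) = \emptyset$), and the clean observation that the eigenvalue equation at points where $u$ vanishes reduces to the simple recursion $u(n+1) = -u(n-1)$, which is incompatible with membership in $l^2(\mz)$ unless $u$ is identically zero. Note that the argument relies on $b_1 \neq b_2$ in an essential way; if $b_1 = b_2$, then $\Im(D)$ is a constant multiple of the identity and $J$ is a self-adjoint operator shifted by $i b_1$, which may certainly have (boundary) eigenvalues.
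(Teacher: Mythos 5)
Your proof is correct and follows essentially the same route as the paper: use Proposition \ref{prop:6} and Lemma \ref{jacobi} to deduce that $\supp(u)$ lies in a single residue class mod $2$, evaluate the three-term recurrence at the complementary residue class (where $u$ vanishes) to obtain $u(n+1)=-u(n-1)$, and conclude that $|u|$ is constant on the surviving coset, contradicting $u \in l^2(\mz)$. The only cosmetic difference is that you make explicit the (trivial) observation that $b$ must equal $b_1$ or $b_2$, which the paper leaves implicit.
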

\begin{proof} 
If $J$ would have a boundary eigenvalue $\lambda$ and $u$ would denote a corresponding eigenfunction, then Proposition \ref{prop:6} and Lemma \ref{jacobi} would imply that $\supp(u)\subset 2\mz$ (or $\supp(u) \subset 2\mz+1$). But then we could choose $n=2m+1$ (or $n=2m$) in the difference equation 
$$ u(n-1)+d(n)u(n)+u(n+1)= \lambda u(n)$$
to obtain that for $m \in \mz$
$$ u(2m)+u(2(m+1))=0 \qquad (\text{ or } u(2(m-1)+1)+u(2m+1)=0).$$
This would imply that the absolute value of $u$ is constant and non-zero on $2\mz$ (or $2\mz+1$), so $u$ would not be in $l^2(\mz)$.  
\end{proof}
\begin{proof}[Proof of Theorem \ref{imagin}]
Let $a+ib$ be a boundary eigenvalue of $J$ with corresponding eigenfunction $u$.\\
(i) Assume there exists $m\in \mz$ with $\Im(d(m))\neq b$ and $\Im(d(m+1)) \neq b$. Then Proposition \ref{prop:6} and Lemma \ref{jacobi} would imply that $u(m)=u(m+1)=0$. But then by unique continuation $u$ can satisfy the equation $Ju=\lambda u$ only if it is identically zero, a contradiction.\\ 
(ii) Part (i) implies that an additional boundary eigenvalue $c+id$, with $d\neq b$, could exist only if $(\Im(d(n)))_{n \in \mz}$ would be of the form (\ref{eq:N}) (with $b_1=b, b_2=d$). On the other hand, we already know from the corresponding lemma that in this case $J$ would have no boundary eigenvalues. 
\end{proof}
Now that we know that all boundary eigenvalues of $J$ will have the same imaginary part, let us try to obtain a little more information on the real parts of these eigenvalues. Our aim is to show that, under certain assumptions, the real part of a boundary eigenvalue of $J_0+D$ cannot lie below or above the essential spectrum of $J_0+\Re(D)$. First, however, let us consider an example which shows that this need not always be true.
\begin{ex}\label{ex:counter2} ($\nu=1$)
Choose $\Re(d)$ such that $J_0+\Re(D)$ has an eigenvalue $a$ below its essential spectrum and let $u$ denote the corresponding eigenfunction. Then standard oscillation theory (see, e.g., \cite{MR1711536}) implies that $u$ will have only finitely many zeros. In complete analogy  to Example \ref{ex:counter} we can define
$$ \Im(d(n)) = \left\{ 
  \begin{array}{cl}
    b & , \text{ if } n \in \supp(u) \\
    0 & , \text{ if } n \notin \supp(u)
  \end{array}\right. $$ 
to obtain a Schr\"odinger operator with boundary eigenvalue $a+ib$. Note that here the set $\{n \in \mz: \Im(d(n)) \neq b\}$ is finite.
\end{ex} 
\begin{prop}\label{prop:jb}($\nu=1$)
Let $a+ib$ be a boundary eigenvalue of $J$. Moreover, assume that $\Im(d(n)) \neq b$ for infinitely many $n \in \mz$. Then 
$$ \inf \sigma_{ess}(J_0+\Re(D)) \leq a \leq \sup \sigma_{ess}(J_0+\Re(D)).$$
\end{prop}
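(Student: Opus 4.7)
The plan is to argue by contradiction: assume $a$ lies strictly outside $[\inf\sigma_{ess}(H),\sup\sigma_{ess}(H)]$, where $H:=J_0+\Re(D)$ is selfadjoint, and show that the $l^2$-eigenfunction of $J$ attached to the boundary eigenvalue $a+ib$ must simultaneously have infinitely many and only finitely many zeros on $\mz$.

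First I would collect what the earlier results already give. By Hildebrandt's theorem and Proposition~\ref{prop:6}, any eigenfunction $u\in l^2(\mz)$ of $J$ at $a+ib$ satisfies both $Hu=au$ and $\Im(D)u=bu$. Lemma~\ref{jacobi} then forces $u(n)=0$ whenever $\Im(d(n))\neq b$, so the standing hypothesis (infinitely many such $n$) implies that $u$ has infinitely many zeros. Splitting $u=u_1+iu_2$ into its real and imaginary parts, at least one $u_j$ is a nontrivial real-valued $l^2$-solution of $Hu_j=au_j$ that inherits infinitely many zeros on $\mz$.

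Next I would confront this with oscillation theory for Jacobi operators on $l^2(\mz)$. Under the contradiction hypothesis, $a\in\sigma(H)$ lies outside $[\inf\sigma_{ess}(H),\sup\sigma_{ess}(H)]$, so it must be an isolated eigenvalue of the selfadjoint Jacobi operator $H$ sitting either strictly below $\inf\sigma_{ess}(H)$ or strictly above $\sup\sigma_{ess}(H)$. In the first case the classical discrete oscillation theorems (cf.\ Teschl \cite{MR1711536}, already cited in Example~\ref{ex:counter2}) tell us that every real $l^2$-eigenfunction attached to such an eigenvalue has only finitely many sign changes, and hence only finitely many zeros, which directly contradicts the previous paragraph. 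The ``above the essential spectrum'' case I would reduce to the ``below'' case via the unitary gauge $(Uv)(n):=(-1)^n v(n)$, which conjugates $-H$ to the Jacobi operator $J_0-\Re(D)$, preserves the zero set of any eigenfunction, and sends an eigenvalue $a>\sup\sigma_{ess}(H)$ to the eigenvalue $-a<\inf\sigma_{ess}(J_0-\Re(D))$, so the same oscillation argument applies to $Uu_j$.

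The one genuinely nontrivial ingredient is the oscillation-theoretic fact that real $l^2$-eigenfunctions attached to isolated eigenvalues outside the essential spectrum of a selfadjoint Jacobi operator have only finitely many zeros; everything else is bookkeeping. This fact is precisely what is implicit in Example~\ref{ex:counter2} for the below-essential-spectrum case, and the $(-1)^n$ gauge transform is the one technical step needed to cover the complementary ``above'' case.
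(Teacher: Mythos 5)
Your argument is correct and follows essentially the same route as the paper: combine Proposition~\ref{prop:6} and Lemma~\ref{jacobi} to conclude that the eigenfunction vanishes at infinitely many sites, then contradict the oscillation-theoretic fact that $l^2$-eigenfunctions of the selfadjoint operator $J_0+\Re(D)$ at eigenvalues outside $[\inf\sigma_{ess},\sup\sigma_{ess}]$ have only finitely many zeros. Your explicit realification of $u$ and the $(-1)^n$ gauge reducing the above-$\sigma_{ess}$ case to the below-$\sigma_{ess}$ case are standard technical refinements that the paper's terser proof leaves implicit, not a genuinely different approach.
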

\begin{proof}
By Proposition \ref{prop:6} there exists $u \in l^2(\mz)$ with $(J_0+\Re(D))u=au$ and $\Im(D)u=b u$.  Furthermore, the assumption and Lemma \ref{jacobi} imply that $u(n)=0$ for infinitely many $n$. On the other hand, as mentioned in Example \ref{ex:counter2}, each eigenfunction of $J_0 + \Re(D)$ corresponding to an eigenvalue below or above the essential spectrum has only a finite number of zeros.
\end{proof}
Finally, let us consider the case where $d(n) \to 0$ for $|n| \to \infty$ (and so $\sigma_{ess}(J)=[-2,2]$, see Remark \ref{rem:app}). From Corollary \ref{cor:XX} (and \ref{cor:imagin}) we know that in this case all boundary eigenvalues must be real and that there will be no boundary eigenvalues  if $\Im(d(m))\neq 0$ and $\Im(d(m+1))\neq 0$ for some $m \in \mz$. If this condition is not satisfied, the next theorem might be useful.  
\begin{thm}($\nu=1$)
Suppose that $d(n) \to 0$ for $|n| \to \infty$ and that    
\begin{enumerate}
\item[(i)]  $\{ \Im(d(n)), \Im(d(n+1))\} \cap \{0\} \neq \emptyset$ for every $n \in \mz$,  
\item[(ii)] $\Im(d(n)) \neq 0$ for infinitely many $n \in \mz$. 
\end{enumerate}
Then $J$ has no boundary eigenvalues if 
\begin{equation}
  \label{eq:de}
\sum_k |k| |\Re(d(k))| < \infty.
\end{equation}
\end{thm}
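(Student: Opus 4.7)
The plan is to reduce the problem to absence of embedded eigenvalues for the selfadjoint operator $J_0+\Re(D)$ and then invoke the classical Jost-solution theory.

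First I would apply Corollary \ref{cor:XX}: since $d(n)\to 0$ forces $\Im(d(n))\to 0$, every boundary eigenvalue of $J$ must be real. Assuming such an $a\in\mr$ exists with eigenfunction $u$, Proposition \ref{prop:6} yields simultaneously $(J_0+\Re(D))u=au$ and $\Im(D)u=0$. Hypothesis (ii) together with Lemma \ref{jacobi} forces $u$ to vanish on the infinite set $\{n:\Im(d(n))\neq 0\}$; the same hypothesis (ii) activates Proposition \ref{prop:jb} with $b=0$ and places $a\in\sigma_{ess}(J_0+\Re(D))=[-2,2]$, where the equality of essential spectra follows from Weyl's theorem because $\Re(d(n))\to 0$ makes $\Re(D)$ a compact perturbation of $J_0$. (Hypothesis (i) is what places us in the only regime not already excluded by Corollary \ref{cor:imagin}; it need not be invoked further in the argument.)

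The heart of the proof is then to show that, under the first-moment hypothesis $\sum_k|k||\Re(d(k))|<\infty$, the selfadjoint Jacobi operator $J_0+\Re(D)$ has no $l^2(\mz)$ eigenfunction with eigenvalue in $[-2,2]$. I would use the classical theory of Jost solutions for short-range discrete Schr\"odinger operators: writing $a=2\cos\theta$ with $\theta\in[0,\pi]$, the moment condition guarantees two linearly independent solutions $\psi_\pm(\cdot,a)$ of $(J_0+\Re(D))\psi=a\psi$ with $\psi_\pm(n,a)=e^{\pm in\theta}(1+o(1))$ as $n\to\pm\infty$. For $\theta\in(0,\pi)$ these satisfy $|\psi_\pm(n)|\to 1$, so no non-trivial linear combination can belong to $l^2(\mz)$ (the Ces\`aro mean of $|u(n)|^2$ tends to a strictly positive limit). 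For $\theta\in\{0,\pi\}$ the two Jost solutions degenerate to the asymptotic profiles $\{1,n\}$ (respectively $\{(-1)^n,(-1)^n n\}$), and the requirement $u\in l^2(\mz)$ forces both asymptotic coefficients to vanish at $+\infty$, so $u\equiv 0$. In either case we contradict the existence of $u$.

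The main obstacle is the threshold case $a=\pm 2$. For $a\in(-2,2)$ the absence of embedded eigenvalues follows already from $\Re(d)\in l^1(\mz)$, which is an easy consequence of the moment hypothesis. At the thresholds, however, the two plane-wave Jost solutions merge, and one has to construct the second, asymptotically linear solution by a Volterra-type iteration whose convergence requires precisely the weighted estimate $\sum_k|k||\Re(d(k))|<\infty$. This is exactly the reason the theorem is stated with the first-moment condition rather than with the weaker $\Re(d)\in l^1(\mz)$.
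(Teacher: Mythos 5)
Your reduction to the selfadjoint problem is exactly the paper's: via Corollary~\ref{cor:XX} all boundary eigenvalues must be real, and then Proposition~\ref{prop:jb} (hypothesis (ii) supplying the infinitely many zeros of $u$) forces any such $\lambda=a$ to be an eigenvalue of $J_0+\Re(D)$ lying in $[-2,2]$. Your observation that (i) only excludes the cases already handled by Corollary~\ref{cor:imagin} is also right. Where you diverge from the paper is in the key step that $J_0+\Re(D)$ has no eigenvalues embedded in $[-2,2]$: the paper gets this in two citations, first using Teschl (Thm.~10.4) to conclude from the moment condition that $J_0+\Re(D)$ has only finitely many eigenvalues outside $[-2,2]$, and then invoking a theorem of Damanik (finiteness of the discrete spectrum of a selfadjoint whole-line Jacobi operator implies absence of embedded eigenvalues) to rule out eigenvalues in $[-2,2]$. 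You instead reprove this directly from the moment hypothesis via Jost solutions: for $2\cos\theta$ with $\theta\in(0,\pi)$ one needs only $\Re(d)\in l^1$ to see that all solutions have Ces\`aro-positive $|u|^2$ and hence are not $l^2$, while at $\theta\in\{0,\pi\}$ the Volterra construction of the second (asymptotically linear) solution requires the weighted $l^1$ bound. Both routes are sound. The paper's is shorter and leans on a clean structural theorem; yours is more self-contained, makes visible where each part of hypothesis~(\ref{eq:de}) is actually used (plain $l^1$ inside the band, first moment at the edges), and avoids passing through the intermediate finiteness-of-discrete-spectrum statement. For a fully rigorous version you would still need to quote or reproduce the Jost-solution existence results (e.g.\ from Teschl's book) rather than sketch them, but the argument as outlined is correct.
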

\begin{proof} 
 We only need to show that $J$ has no real boundary eigenvalues. To this end, note that (\ref{eq:de}) implies that $J_0+\Re(D)$ has only finitely many eigenvalues in $\mr \setminus [-2,2]$ (see \cite{MR1711536} Theorem 10.4). But then \cite{Damanik05}, Theorem 2, implies that $J_0+\Re(D)$ will have no eigenvalues in $[-2,2]$. However, Proposition \ref{prop:jb} shows that any real boundary eigenvalue $\lambda$ of $J$ will be an eigenvalue of $J_0+\Re(D)$ satisfying $\lambda \in [-2,2]$, so no such eigenvalues can exist. 
\end{proof}
\begin{rem}
It would be interesting to know whether some of the above results (like Theorem \ref{imagin}) have analogs in the higher-dimensional case, or whether the absence of unique continuation will prevent such analogs. For the moment, we leave this as an open problem.
\end{rem}

\section{The continuous Schr\"odinger operator}\label{sec:continuous} 
   
In this final section we consider the consequences of Hildebrandt's theorem for the absence of boundary eigenvalues of Schr\"odinger operators $-\Delta + V$  in $L^2(\mr^\nu), \nu\geq 1$. To provide a precise definition of these operators we make the following (rather abstract) assumption on the measurable function $V: \mr^\nu \to \mc$.
\begin{enumerate}
\item[(A1)] The sesquilinear form
\begin{eqnarray*}
  \mathcal{E}_V(f,g) &=& \int_{\mr^\nu} V(x)f(x)\overline{g(x)} dx, \\
 \dom(\mathcal{E}_V) &=& \{ f \in L^2(\mr^\nu): V |f|^2 \in L^1(\mr^\nu)\}
\end{eqnarray*}
is $\mathcal{E}_0$-bounded  with form-bound $<1$, where 
\begin{eqnarray*}
  \mathcal{E}_0(f,g) = \langle \nabla f, \nabla g \rangle, \quad \dom(\mathcal{E}_0)= H^{1,2}(\mr^\nu).
\end{eqnarray*}
\end{enumerate}
Given this assumption the form $\mathcal{E}=\mathcal{E}_0+\mathcal{E}_V, \dom(\mathcal{E})=\dom(\mathcal{E}_0),$ is densely defined, closed and sectorial, 
 so by the first representation theorem (\cite{kato}, Theorem VI.2.1) we can uniquely associate to $\mathcal{E}$ an $m$-sectorial operator $H=:-\Delta \dotplus V$. The numerical range of $H$ will be contained in a sector $\{ \lambda : |\arg(\lambda-\gamma)| \leq \alpha\}$ for some $\gamma \in \mr$ and $\alpha \in [0,\pi/2)$ (see \cite{b_Davies}, Chapter 14.2, for more precise bounds on the numerical range).
\begin{rem}
Note that assumption (A1) can only be satisfied if $V \in L^1_{loc}(\mr^\nu)$. Moreover, (A1) is satisfied if 
 $V \in L^{p}(\mr^\nu)+L^\infty(\mr^\nu)$ (where $p = \nu/2$ if $\nu \geq 3$, $p>1$ if $\nu=2$ and $p=1$ if $\nu=1$), or if
  $|V|$ is in the Kato-class (see \cite{MR670130}). For more general conditions we refer to \cite{MR2198326}.
\end{rem}
To apply Hildebrandt's theorem we have to make sure that $\dom(H)$ is a subset of $\dom(H^*)$. This requires an additional assumption on the imaginary part of the potential.
\begin{enumerate}
\item[(A2)] $ \quad \dom(H) \subset \{ f \in L^2(\mr^\nu) : \Im(V)f \in L^2(\mr^\nu)\}$. 
\end{enumerate}
\begin{rem}
 In other words, (A2) means that $\dom(H)$ is a subset of the domain of the multiplication operator $M_{\Im(V)}$, defined as 
$$M_{\Im(V)}f=\Im(V)f, \quad \dom(M_{\Im(V)})= \{ f \in L^2 : \Im(V)f \in L^2\}. $$
Since the precise domain of $H$ is often quite difficult to establish, this assumption  is even more abstract than (A1). However, since $\dom(H)$ will always be contained in $H^{1,2}(\mr^\nu)$, the Sobolev embedding theorems show that (A2) will be satisfied if $\Im(V) \in L^p(\mr^\nu)+L^\infty(\mr^\nu)$ (where $p=\nu$ if $\nu\geq 3$, $p>2$ if $\nu=2$ and $p=2$ if $\nu=1$).
\end{rem}
In the following lemma $-\Delta \dotplus \Re(V)$ denotes the selfadjoint lower semibounded  operator corresponding to the closed, semibounded form $\mathcal{E}_0+\mathcal{E}_{\Re(V)}$ defined on $\dom(\mathcal{E})$.
\begin{rem}
  Recall that $\Re(H)=\frac 1 2 (H+H^*)$ and $\Im(H)= \frac 1 {2i} (H-H^*)$, both operators being defined on $\dom(H)  \cap \dom(H^*)$. 
\end{rem}
\begin{lemma}\label{lem:form}
Assume (A1) and (A2). Then the following holds:\\
(i)  $\dom(H) \subset \dom(H^*)$ and $H^*=H-2iM_{\Im(V)}$ on $\dom(H)$.\\ 
(ii) $\dom(\Re(H))=\dom(H)$ and $\Re(H)$  is a restriction of $-\Delta \dotplus \Re(V)$. \\
(iii) $\dom(\Im(H))=\dom(H)$ and $ \Im(H)$ is a restriction of $M_{\Im(V)}$.
\end{lemma}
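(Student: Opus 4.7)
The strategy is to exploit Kato's first representation theorem twice: once to characterize $H$ via $\mathcal{E} = \mathcal{E}_0 + \mathcal{E}_V$, and once (for part (ii)) to characterize $-\Delta \dotplus \Re(V)$ via $\mathcal{E}_0 + \mathcal{E}_{\Re(V)}$. Assumption (A2) is what lets us pass from form identities to genuine $L^2$ operator identities, by ensuring that $M_{\Im(V)}f \in L^2$ whenever $f \in \dom(H)$.

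For (i), the plan is to identify the adjoint form. Since $H^*$ corresponds via the representation theorem to the adjoint form $\mathcal{E}^*(f,g) = \overline{\mathcal{E}(g,f)}$, a direct computation shows
$$\mathcal{E}^*(f,g) = \mathcal{E}(f,g) - 2i \int_{\mr^\nu} \Im(V)(x)\, f(x)\, \overline{g(x)}\, dx$$
for $f,g \in \dom(\mathcal{E}) = H^{1,2}(\mr^\nu)$. Given $f \in \dom(H)$, we know $f \in H^{1,2}$ and $\mathcal{E}(f,g) = \langle Hf, g\rangle$ for all $g \in \dom(\mathcal{E})$; by (A2) the vector $\Im(V)f$ lies in $L^2$, so the right-hand side above equals $\langle Hf - 2iM_{\Im(V)}f, g\rangle$ with an $L^2$-representer. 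This is precisely the defining condition placing $f \in \dom(H^*)$ with $H^*f = Hf - 2iM_{\Im(V)}f$.

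Parts (ii) and (iii) are essentially consequences of (i). Since $\dom(H) \subset \dom(H^*)$, we have $\dom(\Re(H)) = \dom(\Im(H)) = \dom(H) \cap \dom(H^*) = \dom(H)$, and substituting the formula from (i) gives
$$\Im(H)f = \tfrac{1}{2i}(Hf - H^*f) = M_{\Im(V)}f, \qquad \Re(H)f = \tfrac{1}{2}(Hf + H^*f) = Hf - iM_{\Im(V)}f$$
for $f \in \dom(H)$. This immediately establishes (iii). For (ii), I would verify that $\Re(H)f$ belongs to the range of the representation theorem applied to $\mathcal{E}_0 + \mathcal{E}_{\Re(V)}$: splitting $\mathcal{E}(f,g) = (\mathcal{E}_0 + \mathcal{E}_{\Re(V)})(f,g) + i\langle M_{\Im(V)}f, g\rangle$ and using $\mathcal{E}(f,g) = \langle Hf, g\rangle$, one obtains
$$(\mathcal{E}_0 + \mathcal{E}_{\Re(V)})(f,g) = \langle Hf - iM_{\Im(V)}f, g\rangle = \langle \Re(H)f, g\rangle \qquad \forall g \in H^{1,2}(\mr^\nu),$$
which is exactly the first-representation-theorem characterization of $f \in \dom(-\Delta \dotplus \Re(V))$ with operator value $\Re(H)f$.

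None of the steps involves real difficulty; the only subtle point is pinpointing where (A2) is used. It enters precisely when we need to replace the sesquilinear integral $\int \Im(V)f\overline{g}\,dx$ by the $L^2$-inner product $\langle M_{\Im(V)}f, g\rangle$, which is needed both to verify $f \in \dom(H^*)$ in step (i) and implicitly to make sense of $M_{\Im(V)}f$ as an element of $L^2$ in parts (ii) and (iii). Without (A2), the expressions would remain well-defined only as forms and the inclusion $\dom(H) \subset \dom(H^*)$ could fail.
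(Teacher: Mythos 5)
Your proof is correct and follows essentially the same route as the paper: identify the adjoint form $\mathcal{E}^*(f,g)=\mathcal{E}(f,g)-2i\mathcal{E}_{\Im(V)}(f,g)$, use (A2) to promote the form identity to an $L^2$ identity and thereby place $f\in\dom(H^*)$ via the first representation theorem, then read off (ii) and (iii) from the resulting formula for $H^*$ on $\dom(H)$. Your observation about where (A2) is truly needed matches the paper's reasoning exactly.
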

Let us recall some facts about the relation between $H$ and $\mathcal{E}$ which will be needed in the proof of Lemma \ref{lem:form}:
\begin{enumerate}
\item[(i)] $\dom(H) \subset \dom(\mathcal{E})$ and $\mathcal{E}(f,g)=\langle Hf,g \rangle$ for all $f \in \dom(H)$ and $g \in \dom(\mathcal{E})$,
\item[(ii)] if $f \in \dom(\mathcal{E})$, $h \in L^2$ and $\mathcal{E}(f,g)=\langle h,g \rangle$ for all $g$ belonging to a core of $\mathcal{E}$, then $f \in \dom(H)$ and $Hf=h$. 
\end{enumerate}
Moreover, $H^*$ is the $m$-sectorial operator associated to the adjoint form ${\mathcal{E}^*}$ given by $${\mathcal{E}^*}(f,g)= \overline{\mathcal{E}(g,f)}, \quad \dom({\mathcal{E}^*})=\dom(\mathcal{E}).$$ 
\begin{proof}[Proof of Lemma \ref{lem:form}]
(i) A short computation shows that for $f,g \in \dom(\mathcal{E})=\dom(\mathcal{E}^*)$ we have
$ \mathcal{E^*}(f,g)= \mathcal{E}(f,g)-2i \mathcal{E}_{\Im(V)}(f,g)$, 
so if $f \in \dom(H)$ and $g \in \dom(\mathcal{E}^*)$ we obtain
\begin{eqnarray*}
  0 = \mathcal{E}(f,g) - \langle Hf,g \rangle = \mathcal{E}^*(f,g)+2i\mathcal{E}_{\Im(V)}(f,g) - \langle Hf ,g \rangle,
\end{eqnarray*}
which implies that $ \mathcal{E}^*(f,g)= \langle Hf-2iM_{\Im(V)}f, g \rangle$. Here $Hf-2iM_{\Im(V)}f$ is in $L^2$ by assumption (A2). Since $g$ was arbitrary this implies that $f \in \dom(H^*)$ and $H^*f= Hf-2iM_{\Im(V)}f$. \\
(ii) From (i) we know that   $\dom(\Re(H))=\dom(H)$ and $\Re(H)=H-iM_{\Im(V)}$, so for $f \in \dom(H)$ and $g \in \dom(\mathcal{E})$ we obtain $\langle \Re(H) f ,g \rangle  = \mathcal{E}_0(f, g) + \mathcal{E}_{\Re(V)}( f, g )$. But this implies that $f \in \dom(-\Delta \dotplus \Re(V))$ and that $\Re(H)f=(-\Delta \dotplus \Re(V))f$.\\
(iii) This is an immediate consequence of (i).
\end{proof}
We are finally prepared to state a first  result on the boundary eigenvalues of $H$. It is a direct consequence of Lemma \ref{lem:form}, Theorem \ref{unbounded} and Proposition \ref{prop:1}.   
  \begin{prop}\label{prop:2}
Assume (A1) and (A2). Let $\lambda$ be a boundary eigenvalue of $H$ with corresponding eigenfunction $f$. Then
$$ (-\Delta \dotplus \Re(V))f=\Re(\lambda)f \qquad \text{and} \qquad M_{\Im(V)}f=\Im(\lambda)f.$$
  \end{prop}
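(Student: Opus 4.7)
The plan is to simply compose the three ingredients the author lists. First I would verify that Hildebrandt's theorem (Theorem \ref{unbounded}) actually applies to $H$: this requires the domain inclusion $\dom(H) \subset \dom(H^*)$, which is precisely the content of Lemma \ref{lem:form}(i). Thus, under assumptions (A1) and (A2), every eigenfunction $f$ of $H$ corresponding to an eigenvalue $\lambda \in \partial(\num(H))$ lies in $\dom(H^*)$ and satisfies $H^* f = \overline{\lambda} f$.

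Next, with $f \in \ker(\lambda - H) \cap \ker(\overline{\lambda} - H^*)$ in hand, Proposition \ref{prop:1} gives the operator identities
\begin{equation*}
\Re(H)f = \Re(\lambda) f \qquad \text{and} \qquad \Im(H) f = \Im(\lambda) f,
\end{equation*}
both of which make sense because $f \in \dom(H) \cap \dom(H^*) = \dom(\Re(H)) = \dom(\Im(H))$ by Lemma \ref{lem:form}(i)--(iii).

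Finally I would translate these identities from the abstract operators $\Re(H)$ and $\Im(H)$ to the concrete expressions in the statement. By Lemma \ref{lem:form}(ii), $\Re(H)$ is a restriction of $-\Delta \dotplus \Re(V)$, so from $\Re(H)f = \Re(\lambda)f$ one reads off $f \in \dom(-\Delta \dotplus \Re(V))$ with $(-\Delta \dotplus \Re(V))f = \Re(\lambda)f$. Similarly, by Lemma \ref{lem:form}(iii), $\Im(H)$ is a restriction of $M_{\Im(V)}$, giving $M_{\Im(V)}f = \Im(\lambda)f$.

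Since every step is a direct quotation of results already established, there is no genuine obstacle here; the only thing to be slightly careful about is the bookkeeping of domains, namely that $f \in \dom(H)$ ensures the applicability of both Lemma \ref{lem:form}(ii) (so $\Re(V)f$ is meaningful in the form sense to produce an element of $\dom(-\Delta \dotplus \Re(V))$) and Lemma \ref{lem:form}(iii) (so $\Im(V)f \in L^2$ by (A2)). This is why assumption (A2) cannot be dropped in the statement.
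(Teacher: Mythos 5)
Your proof is correct and coincides with the paper's, which simply declares the proposition a direct consequence of Lemma \ref{lem:form}, Theorem \ref{unbounded}, and Proposition \ref{prop:1}; you have merely spelled out the same chain of implications with the appropriate domain bookkeeping.
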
 
In the following corollary we use the fact that $b$ is an eigenvalue of $M_{\Im(V)}$ iff the set $\{x : \Im(V(x))=b\}$ has non-zero Lebesgue measure.
  \begin{cor}\label{cor:4}
Assume (A1) and (A2). Then the following holds:\\
(i) If $a \in \mr$ is not an eigenvalue of $-\Delta \dotplus \Re(V)$, then $H$ has no boundary eigenvalues with real part $a$.\\
(ii) Let $b \in \mr$. If the set $\{ x : \Im(V(x))=b \}$ has Lebesgue measure zero, then $H$ has no boundary eigenvalues with imaginary part $b$.
      \end{cor}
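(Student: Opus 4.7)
The plan is to deduce both parts directly from Proposition \ref{prop:2}, which already does the heavy lifting via Hildebrandt's theorem and Lemma \ref{lem:form}. The only additional ingredient needed is the elementary characterization of the point spectrum of the multiplication operator $M_{\Im(V)}$ mentioned just before the corollary, namely that $b \in \mr$ is an eigenvalue of $M_{\Im(V)}$ if and only if $\meas(\{x : \Im(V(x))=b\})>0$.

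First I would recall briefly why this characterization holds: if $f \in \dom(M_{\Im(V)})$ is nonzero with $M_{\Im(V)}f=bf$, then $(\Im(V(x))-b)f(x)=0$ for almost every $x$, so $f$ must vanish a.e.\ on the set $\{\Im(V)\neq b\}$. If the complementary set $\{\Im(V)=b\}$ has Lebesgue measure zero, this forces $f=0$ a.e., a contradiction. (The converse direction, which is not actually needed, follows by taking $f$ to be the characteristic function of a subset of $\{\Im(V)=b\}$ of finite positive measure.)

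For part (i), I would argue by contradiction: assume $H$ has a boundary eigenvalue $\lambda$ with $\Re(\lambda)=a$ and let $f \in \dom(H)$, $f \neq 0$, be a corresponding eigenfunction. By Proposition \ref{prop:2} we have $(-\Delta \dotplus \Re(V))f=af$, so $a$ is an eigenvalue of $-\Delta \dotplus \Re(V)$, contradicting the hypothesis. For part (ii), again by contradiction: if $\lambda$ is a boundary eigenvalue of $H$ with $\Im(\lambda)=b$ and eigenfunction $f \neq 0$, then Proposition \ref{prop:2} gives $M_{\Im(V)}f=bf$, so $b$ is an eigenvalue of $M_{\Im(V)}$, which by the characterization above forces $\meas(\{x : \Im(V(x))=b\})>0$, again contradicting the hypothesis.

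There is essentially no obstacle here; the entire work has been carried out in Proposition \ref{prop:2} and Lemma \ref{lem:form}. The only thing one must check is that both parts of Proposition \ref{prop:2} are logically independent conclusions, so that either one alone produces the contradiction — and that is immediate from the statement. Thus the proof reduces to a one-line application of Proposition \ref{prop:2} together with the standard fact about the point spectrum of a real multiplication operator on $L^2(\mr^\nu)$.
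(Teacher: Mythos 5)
Your proof is correct and is exactly the argument the paper intends: the corollary is stated immediately after Proposition \ref{prop:2} with the remark about the point spectrum of $M_{\Im(V)}$, and no separate proof is given because, as you observe, both parts follow in one line from Proposition \ref{prop:2} together with that standard fact. Your brief justification of the multiplication-operator characterization is a harmless (and correct) addition.
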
 
\begin{ex}
The spectrum of $-\Delta$ 
is equal to $[0,\infty)$ and purely absolutely continuous. Part (i) of the previous corollary thus shows that the operator $-\Delta \dotplus i \Im(V)$ has no boundary eigenvalues.        
\end{ex}
\begin{rem}
  If $\Im(V)$ has a fixed sign then all real eigenvalues of $H$ are boundary eigenvalues, so in this case the results discussed in this section can be used to show the absence of these eigenvalues.
\end{rem} 
Similar to the discrete case, we can strengthen the above results using the following unique continuation result (\cite{jerison}, Thm. 6.3 and Rem. 6.7). Here, 
$$ H_{loc}^{k,q}(\mr^\nu)= \left\{ f \in L_{loc}^q(\mr^\nu) : \frac {\partial^\alpha}{\partial x^\alpha} f \in L_{loc}^q(\mr^\nu), \forall \alpha, |\alpha|\leq k \right\}$$
for $k \in \mn$ and $q \in [1,\infty]$. Moreover, let us agree that in the following  $\Omega$ will denote some non-empty \textit{open} subset of $\mr^\nu$.
\begin{thm}\label{unique}
Let $W \in L^p_{loc}(\mr^\nu)$ where $p=\nu/2$ if $\nu\geq 3$, $p>1$ if $\nu=2$ and $p=1$ if $\nu=1$. Moreover, let $u \in H^{2,q}_{loc}(\mr^\nu) \cap L^2_{loc}(\mr^\nu)$ where $q=(2\nu)/(\nu+2)$ if $\nu\geq 2$ and $q=1$ if $\nu=1$, and assume that $u$ is a distributional solution of
\begin{equation}
  \label{eq:dis}
 (-\Delta + W) u = 0
\end{equation}
which is zero a.e. on $\Omega$. Then $u$ is zero a.e. on $\mr^\nu$.
\end{thm}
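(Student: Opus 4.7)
The plan is to follow the classical Carleman-estimate approach used by Jerison and Kenig. First I would make a topological reduction: let $U := \{x \in \mr^\nu : u = 0 \text{ a.e.\ in some neighborhood of } x\}$. By hypothesis $U$ contains $\Omega$ and is nonempty; it is open by definition. Connectedness of $\mr^\nu$ then reduces the theorem to showing that $U$ is also closed, which in turn reduces to a local propagation statement: if $u$ vanishes a.e.\ on $B(x_0,r)\setminus\{y\}$ for some boundary point $y$, then $u$ vanishes a.e.\ on a neighborhood of $y$.

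The engine for propagation is a Jerison--Kenig weighted Carleman estimate with polynomial weights $\phi_\tau$ centered at $y$ (behaving like $|x-y|^{-\tau}$ after a suitable change of variable), of the form
\[
  \bigl\|\phi_\tau v\bigr\|_{L^{q'}(\mr^\nu)} \leq C\bigl\|\phi_\tau \Delta v\bigr\|_{L^{q}(\mr^\nu)},
\]
valid for all $v \in C_c^\infty(\mr^\nu\setminus\{y\})$ and for $\tau$ outside a discrete exceptional set, with $C$ independent of $\tau$. The Sobolev-dual exponents $(p,q,q')$ are dictated precisely by Hölder's inequality $\|Wv\|_{L^q} \leq \|W\|_{L^p}\|v\|_{L^{q'}}$, which is what pins down the specific exponents in the hypotheses of the theorem. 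I would then apply this estimate to $v = \chi u$, where $\chi \in C_c^\infty$ equals one on a small ball around $y$ and is supported in a ball $B$ chosen small enough (via absolute continuity of the integral) that $\|W\|_{L^p(B)} < 1/(2C)$. Expanding $\Delta(\chi u) = -\chi W u + 2\nabla\chi \cdot \nabla u + u\Delta\chi$, one absorbs the first term into the left-hand side via Hölder, while the commutator terms are supported in a fixed annulus where $u$ already vanishes a.e. This yields an inequality whose right-hand side is bounded uniformly in $\tau$ on that fixed annulus, while the left-hand side, restricted to a small ball around $y$, carries a weight tending to infinity with $\tau$. Sending $\tau \to \infty$ forces $u = 0$ a.e.\ on a neighborhood of $y$, closing the propagation argument.

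The main obstacle is the Carleman estimate itself at the critical Sobolev exponents $(p,q) = (\nu/2,\ 2\nu/(\nu+2))$; this is the deep content of Jerison--Kenig and relies on sharp spherical-harmonic decompositions combined with uniform Stein--Tomas-type Sobolev inequalities. A secondary technicality, handled by mollification, is that $u$ is merely $H^{2,q}_{loc}$ rather than smooth; the regularity hypotheses in the statement are calibrated precisely so that the density argument extending the Carleman inequality from $C_c^\infty$ to the test function $\chi u$ goes through, and so that the distributional equation $-\Delta u = -Wu$ makes sense as an identity in $L^q_{loc}$.
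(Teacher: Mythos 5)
The paper does not prove this theorem at all: it is imported directly from Jerison and Kenig (their Theorem 6.3 together with Remark 6.7), and the only comment the paper adds is that the trivial one-dimensional case has been appended for completeness. Your proposal is therefore not a competing route but a reconstruction of the cited source, and as such it is an accurate high-level account of the Jerison--Kenig argument: the open--closed reduction by connectedness, a weighted Carleman inequality between the conjugate Sobolev exponents $q=2\nu/(\nu+2)$ and $q'=2\nu/(\nu-2)$, absorption of the potential term by H\"older (which is exactly what pins down $p=\nu/2$), and the limit $\tau\to\infty$ over a sequence avoiding the discrete exceptional set. You are also right to flag the endpoint Carleman estimate as the hard analytic core that cannot be reproved from scratch here.

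A few small inaccuracies, none fatal. Since $(-\Delta+W)u=0$ gives $\Delta u = Wu$, the Leibniz expansion reads $\Delta(\chi u)=\chi Wu + 2\nabla\chi\cdot\nabla u + u\Delta\chi$; you have the wrong sign on the first term, though this has no bearing on the absorption step. The propagation reduction is phrased loosely: in the standard argument one takes a ball $B\subset U$ with a boundary point $y\in\partial U$ and shows $u$ vanishes near $y$, with no need to excise $y$ from $B$. Finally, the Carleman inequality in the form you quote is really the $\nu\geq 3$ statement; at $\nu=2$ the exponent $q'=2\nu/(\nu-2)$ degenerates and a separate formulation is required, while $\nu=1$ is elementary (an ODE unique continuation argument), as the paper itself notes. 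Bottom line: your sketch is consistent with, and essentially a summary of, the cited reference, but it is an outline of that reference rather than a self-contained proof, and the paper does not attempt one either.
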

\begin{rem}
 Clearly, in \cite{jerison} this theorem is formulated for $\nu \geq 2$ only. We have included the (obvious) case $\nu=1$ for completeness.
\end{rem}
\begin{rem}
If $W \in L^p_{loc}$ then the same is true of $W-E$ for every $E \in \mr$. 
\end{rem} 
In the remainder of this section we need the following additional assumption on the real part of the potential (if $\nu \geq 2$):
\begin{itemize}
\item[(A3)] $\Re(V) \in L^p_{loc}(\mr^\nu)$ where $p=\nu/2$ if $\nu \geq 3$ and $p>1$ if $\nu=2$. 
\end{itemize}
The next lemma is borrowed from \cite{amrein} (see the final remark in that paper). We include a sketch of proof for completeness.
\begin{lemma}\label{lem:unique}
Assume (A1) and (A3). For $f \in \dom(-\Delta \dotplus \Re(V))$ and $E \in \mr$ let
$$ (-\Delta \dotplus \Re(V))f = Ef.$$
If $f=0$ a.e. on $\Omega$, then $f=0$  a.e. on $\mr^\nu$.
\end{lemma}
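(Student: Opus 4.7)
The plan is to reduce the statement to a direct application of Theorem~\ref{unique} with $u=f$ and $W = \Re(V)-E$. Two things have to be checked for this to work: the potential $W$ has the required $L^p_{loc}$ regularity, and the function $f$ has the required $H^{2,q}_{loc}\cap L^2_{loc}$ regularity with $(-\Delta+W)f=0$ holding in the distributional sense. Of these, the first is essentially immediate and the second is the real content.

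First I would unpack the meaning of $(-\Delta\dotplus\Re(V))f=Ef$ via the first representation theorem. By construction, this operator is associated to the closed, semibounded form $\mathcal{E}_0+\mathcal{E}_{\Re(V)}$ on $\dom(\mathcal{E})=H^{1,2}(\mr^\nu)$, so the eigenvalue equation gives
\begin{equation*}
\langle\nabla f,\nabla g\rangle+\int_{\mr^\nu}\Re(V)f\overline{g}\,dx=E\langle f,g\rangle\qquad\text{for all }g\in H^{1,2}(\mr^\nu).
\end{equation*}
Specializing to $g\in C_c^\infty(\mr^\nu)$ yields $(-\Delta+W)f=0$ in the sense of distributions, where $W:=\Re(V)-E$. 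Since (A3) (respectively the fact that (A1) forces $V\in L^1_{loc}$ when $\nu=1$) gives $\Re(V)\in L^p_{loc}$ with $p$ as in Theorem~\ref{unique}, and constants are trivially in $L^p_{loc}$, we have $W\in L^p_{loc}(\mr^\nu)$ as required.

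The main step is to verify that $f\in H^{2,q}_{loc}(\mr^\nu)\cap L^2_{loc}(\mr^\nu)$. The $L^2_{loc}$ part is automatic from $f\in L^2(\mr^\nu)$. For the Sobolev regularity one uses that $f\in\dom(\mathcal{E})=H^{1,2}(\mr^\nu)$, so by the Sobolev embedding $f\in L^{2^*}_{loc}$ with $2^*=2\nu/(\nu-2)$ when $\nu\geq 3$, while $f\in L^s_{loc}$ for every $s<\infty$ when $\nu=2$, and $f$ is even continuous when $\nu=1$. A Hölder estimate then gives $Wf\in L^{q}_{loc}$ with the same $q$ as in Theorem~\ref{unique}: indeed, for $\nu\geq 3$ one checks $1/p+1/2^*=2/\nu+(\nu-2)/(2\nu)=(\nu+2)/(2\nu)=1/q$, while the cases $\nu=1,2$ are easier. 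From the distributional equation $\Delta f=Wf\in L^q_{loc}$ and interior elliptic regularity (Calderón–Zygmund) one concludes $f\in H^{2,q}_{loc}(\mr^\nu)$.

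Once these regularity properties are established, all hypotheses of Theorem~\ref{unique} are in place and the conclusion $f=0$ a.e.\ on $\Omega$ forces $f=0$ a.e.\ on $\mr^\nu$. The main obstacle in this plan is the regularity bootstrap in the critical dimension $\nu=2$, where Sobolev embedding is borderline and one has to exploit the strict inequality $p>1$ together with $f\in L^s_{loc}$ for all finite $s$ to land in $L^q_{loc}$; the remaining dimensions $\nu=1$ and $\nu\geq 3$ are then straightforward Hölder computations.
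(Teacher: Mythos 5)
Your proposal is correct and follows essentially the same route as the paper: reduce to the Jerison--Kenig unique continuation result (Theorem~\ref{unique}) by verifying that $W=\Re(V)-E\in L^p_{loc}$ and that $f\in H^{2,q}_{loc}$, the latter via Sobolev embedding for $H^{1,2}$, H\"older's inequality to get $Wf\in L^q_{loc}$, and interior elliptic regularity. The paper's proof only records the case $\nu\geq 3$ (remarking that the others are similar or obvious); you fill in the low-dimensional cases and also make explicit the step of passing from the form identity to the distributional equation $(-\Delta+W)f=0$, which the paper leaves implicit. These are useful clarifications but do not change the substance of the argument.
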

\begin{proof}
We only consider the case $\nu \geq 3$. In view of Theorem \ref{unique} it is sufficient to show that $f \in H_{loc}^{2,(2\nu)/(\nu+2)}(\mr^\nu)$. But $f \in H^{1,2}(\mr^\nu)$ by assumption  and $H^{1,2}(\mr^\nu) \subset L^{2\nu/(\nu-2)}(\mr^\nu)$ by Sobolev embedding. So (A3) and H\"older's inequality imply that $(\Re(V)-E)f \in L_{loc}^{(2\nu)/(\nu+2)}(\mr^\nu)$ and then the same must be true of $-\Delta f$. But this shows that $f \in H_{loc}^{2,(2\nu)/(\nu+2)}(\mr^\nu)$, see \cite{MR1996773} Theorems 7.9.7 and 4.5.13.
\end{proof}
Here is our main criterion for the absence of boundary eigenvalues.
\begin{thm}\label{thm:last}
Assume (A1)-(A3). If $H$ has a boundary eigenvalue with imaginary part $b$, then for every non-empty open set $\Omega \subset \mr^\nu$ the set
$$ \{ x \in \Omega : \Im(V(x))=b\}$$
has non-zero Lebesgue measure.
\end{thm}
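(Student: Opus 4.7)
The plan is to argue by contraposition. I assume there exists a non-empty open set $\Omega \subset \mr^\nu$ on which the set $\{x \in \Omega : \Im(V(x)) = b\}$ has Lebesgue measure zero, and I show that under this assumption $H$ admits no boundary eigenvalue with imaginary part $b$.

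Suppose, for contradiction, that $\lambda = a+ib$ is such a boundary eigenvalue with eigenfunction $f \neq 0$. Proposition \ref{prop:2}---the essential output of Hildebrandt's theorem in this setting---immediately furnishes the two simultaneous identities
\[(-\Delta \dotplus \Re(V))f = af \qquad \text{and} \qquad M_{\Im(V)} f = bf.\]
The multiplication identity $(\Im(V(x)) - b) f(x) = 0$ forces $f(x) = 0$ at almost every point where $\Im(V(x)) \neq b$. By the standing hypothesis on $\Omega$, the complementary set $\{x \in \Omega : \Im(V(x)) \neq b\}$ has full Lebesgue measure inside $\Omega$, so $f$ vanishes almost everywhere on the non-empty open set $\Omega$.

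At this point I invoke the unique continuation Lemma \ref{lem:unique}: the eigenfunction $f$ lies in $\dom(-\Delta \dotplus \Re(V))$ and satisfies $(-\Delta \dotplus \Re(V))f = af$ with $a \in \mr$, and assumptions (A1) and (A3) are in force, so the vanishing of $f$ a.e.\ on the open set $\Omega$ propagates to all of $\mr^\nu$. This contradicts the nontriviality of $f$ and completes the argument. The proof is thus a direct assembly of two previously established tools, and I do not anticipate a real obstacle beyond verifying that Proposition \ref{prop:2} genuinely places $f$ in the domain of $-\Delta \dotplus \Re(V)$, which is precisely part of its conclusion.
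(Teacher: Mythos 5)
Your proof is correct and follows essentially the same route as the paper's: apply Proposition \ref{prop:2} to get the two simultaneous identities, use the multiplication identity $M_{\Im(V)}f = bf$ together with the measure-zero hypothesis to conclude $f = 0$ a.e.\ on $\Omega$, and then invoke Lemma \ref{lem:unique} (unique continuation) to force $f \equiv 0$, a contradiction.
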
 
\begin{proof}
Let $a+ib$ be a boundary eigenvalue of $H$ with corresponding eigenfunction $f$. Suppose there exists a non-empty open set $\Omega$ such that  $$A :=\{x \in \Omega : \Im(V(x))=b\}$$ has Lebesgue measure zero. Then from Proposition \ref{prop:2} we know that $$B:=\{x : \Im(V(x))f(x) \neq bf(x)\}$$ has Lebesgue measure zero as well. Since $\{x \in \Omega: f(x)\neq 0\}$ is a subset of $A \cup B$, this shows that $f=0$ a.e. on $\Omega$.  But we also have $(-\Delta \dotplus \Re(V))f=af$, so Lemma \ref{lem:unique} implies that $f=0$ a.e. on $\mr^\nu$, a contradiction.
\end{proof}
\begin{rem}
(i) The condition that $\{ x \in \Omega: \Im(V(x))=b\}$ has non-zero Lebesgue measure for every non-empty open set $\Omega \subset \mr^\nu$ means that $\{x : \Im(V(x))=b\}$ is \textit{metrically dense} in $\mr^\nu$ (with respect to Lebesgue measure), see \cite{MR0072928}. This is certainly a very restrictive condition (for instance, it requires that for every Lebesgue null set $N \subset \mr^\nu$ the set $ \{ x \in \mr^\nu \setminus N : \Im(V(x))=b\}$ is dense in $\mr^\nu$). However, we note that this condition can be satisfied simultaneously for two different $b$'s and so (in isolation) does neither imply that all boundary eigenvalues must have the same imaginary part nor that $\Im(V)$ is constant a.e. on $\mr^\nu$.  For instance, this follows from the fact that $\mr^\nu$ can be partitioned into two disjoint metrically dense sets $A_1,A_2$ (see \cite{MR0072928} for a much more general result), by choosing $\Im(V) = b_1\chi_{A_1}+b_2\chi_{A_2}$, where $b_1 \neq b_2$ and $\chi_{A_i}$ denotes the characteristic function of $A_i$.\\
(ii) On the other hand, we are currently not aware of an example of a Schr\"odinger operator with boundary eigenvalues when the imaginary part of the potential is not constant a.e. and the results presented below seem to suggest that such an example (if it exists) might be quite difficult to obtain.\\
(iii) $H=-\Delta \dotplus V$ can of course have boundary eigenvalues if $\Im(V)$ is constant a.e., since in this case it is just a selfadjoint Schr\"odinger operator shifted by a complex constant.
\end{rem}
 Let us further indicate the restrictiveness of  Theorem \ref{thm:last} by considering some corollaries (we always assume (A1)-(A3)). 
 \begin{rem}
  Recall that a measurable function $\tilde{V}: \mr^\nu \to \mc$ is a \textit{representative} of $V$ if these two functions coincide almost everywhere on $\mr^\nu$.
 \end{rem}
\begin{cor}\label{cor:final}  
Let $\tilde{V}$ be a representative of $V$. 
\begin{enumerate}
\item[(i)] If  $b:=\lim_{x \to x_0} \Im(\tilde{V}(x)) \in \mr$ exists for some $x_0 \in \mr^\nu$, then any boundary eigenvalue of $H$ must have imaginary part $b$.
\item[(ii)] If for some $x_0,x_1 \in \mr^\nu$ the limits $\lim_{ x \to x_0} \Im(\tilde{V}(x))$ and $\lim_{x \to x_1} \Im(\tilde{V}(x))$ exist and don't coincide, 
then $H$ has no boundary eigenvalues. 
\end{enumerate}
Furthermore, (i) and (ii) remain valid if $\lim\limits_{x \to x_0} \Im(\tilde{V}(x))$ is replaced with $\lim\limits_{\|x\| \to \infty} \Im(\tilde{V}(x))$.
\end{cor}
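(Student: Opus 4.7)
The plan is to deduce both parts from Theorem \ref{thm:last}, so everything reduces to producing, for each "forbidden" value $b' \neq b$, a non-empty open set $\Omega$ on which $\{x \in \Omega : \Im(V(x)) = b'\}$ has Lebesgue measure zero. The key observation is that $V$ and its representative $\tilde V$ coincide almost everywhere, so the Lebesgue measure of $\{x \in \Omega : \Im(V(x)) = b'\}$ equals that of $\{x \in \Omega : \Im(\tilde V(x)) = b'\}$, and it suffices to work with $\tilde V$.

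For part (i), fix $b' \in \mr$ with $b' \neq b$ and set $\eps := |b - b'|/2 > 0$. By the definition of $\lim_{x \to x_0} \Im(\tilde V(x)) = b$ there exists $\delta > 0$ such that $|\Im(\tilde V(x)) - b| < \eps$ for all $x$ in the punctured ball $\Omega := \{ x \in \mr^\nu : 0 < \|x - x_0\| < \delta\}$, which is open and non-empty. In particular $\Im(\tilde V(x)) \neq b'$ for every $x \in \Omega$, so the set above is empty, hence of Lebesgue measure zero. By Theorem \ref{thm:last}, $H$ has no boundary eigenvalue with imaginary part $b'$. Since $b'$ was arbitrary, any boundary eigenvalue of $H$ must have imaginary part equal to $b$. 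Part (ii) then follows immediately: if $b_0$ and $b_1$ are the two distinct limits at $x_0$ and $x_1$, every boundary eigenvalue would have to have imaginary part equal to both $b_0$ and $b_1$, which is impossible, so no boundary eigenvalue can exist.

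For the "furthermore" clause, the argument is verbatim the same after replacing the local neighborhood by $\Omega := \{x \in \mr^\nu : \|x\| > R\}$, where $R$ is chosen large enough (using the definition of limit at infinity) so that $|\Im(\tilde V(x)) - b| < \eps$ on $\Omega$; this $\Omega$ is again open and non-empty. There is essentially no obstacle here beyond making the identification between $V$ and $\tilde V$ careful; the only place one has to think twice is noting that $\{x : \Im(V) = b'\}$ and $\{x : \Im(\tilde V) = b'\}$ agree up to a Lebesgue null set, which keeps the hypothesis of Theorem \ref{thm:last} unambiguous regardless of which representative one works with.
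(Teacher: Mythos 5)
Your proof is correct and follows essentially the same route as the paper: isolate a small neighborhood (ball or punctured ball around $x_0$, resp.\ complement of a large ball) where $\Im(\tilde V)$ avoids the forbidden value $b'$, note that the corresponding level set of $\Im(V)$ then differs from the empty set only by the null set on which $V$ and $\tilde V$ disagree, and apply the contrapositive of Theorem~\ref{thm:last}. The paper phrases the null-set bookkeeping slightly differently (it observes the level set is contained in $N$ rather than passing through $\tilde V$ and back), but that is a cosmetic distinction.
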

\begin{proof}
Let $\tilde{V} : \mr^\nu \to \mc$ with $\tilde{V}(x)=V(x)$ for all $x \in \mr^\nu \setminus N$, where $N$ is a null set. To prove (i) note that if $\lim_{x \to x_0} \Im(\tilde{V}(x)) = b$, then for every $d \neq b$ we can find some $\delta > 0$ such that $\Im(\tilde{V}(x)) \neq d$ for $\|x-x_0\|<\delta$. But then
$$ \{ x : \|x-x_0\| < \delta \text{ and } \Im(V(x))=d \}$$ must be a subset of $N$ and so has Lebesgue measure zero. Now Theorem \ref{thm:last} (applied to the open set $\{x : \|x-x_0\| < \delta\}$) shows that $H$ will have no boundary eigenvalues with imaginary part $d$. A similar argument can be used when $\lim_{\|x\| \to \infty} \Im(\tilde{V}(x)) =b$. Finally, Part (ii) is an immediate consequence of Part (i).
\end{proof}
\begin{rem}
In many applications one is interested in the case where $V(x)$ tends to $0$ for $\|x\| \to \infty$. Here the spectrum of $H$ consists of $[0,\infty)$ and a possible discrete set of eigenvalues which can accumulate at $[0,\infty)$ only. The previous corollary shows that none of the non-real eigenvalues of $H$ will be a boundary eigenvalue.
\end{rem}
From Corollary \ref{cor:final} we immediately obtain
\begin{cor}\label{cor:nexttolast}
Suppose that $V$ has a representative $\tilde{V}$ whose imaginary part is continuous at $x_0$ and $x_1$ with $\Im(\tilde{V}(x_0)) \neq \Im(\tilde{V}(x_1))$. Then $H$ has no boundary eigenvalues.
\end{cor}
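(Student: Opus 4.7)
The plan is to reduce the statement to a direct invocation of Corollary \ref{cor:final}(ii). The hypothesis that the imaginary part of the representative $\tilde{V}$ is continuous at $x_0$ means, by the very definition of continuity at a point, that the limit $\lim_{x \to x_0} \Im(\tilde{V}(x))$ exists in $\mr$ and coincides with the value $\Im(\tilde{V}(x_0))$. Applying the same observation at $x_1$ yields the existence of $\lim_{x \to x_1} \Im(\tilde{V}(x)) = \Im(\tilde{V}(x_1))$. By assumption, these two limits do not agree.

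At this point the statement of Corollary \ref{cor:final}(ii) applies verbatim: two points $x_0, x_1 \in \mr^\nu$ at which the limits of $\Im(\tilde V)$ exist and disagree force $H$ to have no boundary eigenvalues. There is essentially no obstacle here, as the assertion is a routine specialization of the previous corollary to the setting where both limits come from pointwise continuity. The only thing to note is that we rely on $\tilde V$ being a genuine representative of $V$ (so that Corollary \ref{cor:final} applies as stated); the almost-everywhere ambiguity in $V$ is harmless because the result is formulated precisely in terms of such a representative.
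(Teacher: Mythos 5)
Your proof is correct and follows exactly the route the paper takes: the paper states this corollary as an immediate consequence of Corollary \ref{cor:final}, and your argument spells out precisely why --- continuity at $x_0$ and $x_1$ gives the existence of the two limits, which disagree by hypothesis, so part (ii) applies.
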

In the following we would like to mention one further condition which implies that all boundary eigenvalues must have the same imaginary part.
\begin{rem}
  We say that an eigenfunction $u$ of $H$ is continuous if it has a continuous representative.
\end{rem}
\begin{thm}
  Assume (A1)-(A3). Suppose that $H$ has a boundary eigenvalue with a continuous eigenfunction. Then all boundary eigenvalues of $H$ must have the same imaginary part.
\end{thm}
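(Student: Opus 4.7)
I would argue by contradiction. Suppose $\lambda_0 = a+ib$ is a boundary eigenvalue of $H$ admitting a continuous eigenfunction $f$, and suppose for contradiction that $\lambda_1 = c+id$ is a second boundary eigenvalue with $d \neq b$ and corresponding eigenfunction $g \in \dom(H)$. The strategy is to combine Proposition \ref{prop:2} with the unique-continuation Lemma \ref{lem:unique} to force $g = 0$ a.e.\ on $\mr^\nu$, contradicting the nontriviality of $g$.

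First I would pass to the continuous representative of $f$ (which by hypothesis exists) and define
$$\Omega := \{x \in \mr^\nu : f(x) \neq 0\}.$$
Continuity of $f$ makes $\Omega$ open, and nontriviality of $f$ makes $\Omega$ nonempty. Proposition \ref{prop:2} applied to $\lambda_0$ gives $M_{\Im(V)}f = bf$, that is $(\Im(V(x))-b)f(x)=0$ for a.e.\ $x$, whence $\Im(V)=b$ a.e.\ on $\Omega$.

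Next, Proposition \ref{prop:2} applied to $\lambda_1$ gives both $M_{\Im(V)}g = dg$ and $(-\Delta \dotplus \Re(V))g = cg$. The first identity reads $(\Im(V(x))-d)g(x)=0$ a.e., and since $\Im(V)=b\neq d$ a.e.\ on $\Omega$, this forces $g = 0$ a.e.\ on $\Omega$. Now Lemma \ref{lem:unique}, applied to $g$ on the nonempty open set $\Omega$ and to $E=c$, yields $g=0$ a.e.\ on $\mr^\nu$, contradicting the fact that $g$ is a nontrivial eigenfunction.

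Essentially the only place where the continuity hypothesis is genuinely needed is the step that promotes the measurable support $\{f\neq 0\}$ to a genuine open set, which is exactly what makes Lemma \ref{lem:unique} applicable; without continuity the set $\Omega$ might be too ``thin'' to serve as an open set witnessing unique continuation. I do not anticipate any serious technical obstacle beyond this bookkeeping point, since assumptions (A1)--(A3) have been built precisely so that Proposition \ref{prop:2} and Lemma \ref{lem:unique} are available.
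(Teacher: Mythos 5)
Your proof is correct and is essentially the paper's argument. The only cosmetic difference is that the paper invokes Theorem \ref{thm:last} (applied to the second eigenvalue $c+id$) to obtain the contradiction on the Lebesgue measure of $\{x\in\Omega:\Im(V(x))=d\}$, whereas you unwind that theorem by one layer and apply Lemma \ref{lem:unique} directly to the second eigenfunction $g$; since Theorem \ref{thm:last} is itself deduced from Lemma \ref{lem:unique}, the two routes coincide in substance.
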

\begin{proof}
  Let $a+ib$ be a boundary eigenvalue of $H$ and let $f$ denote a continuous representative of a corresponding eigenfunction. Assume there exists another boundary eigenvalue $c+id$, with $d\neq b$. By Proposition \ref{prop:2} we have $M_{\Im(V)}f=bf$, so 
$$ A:=\{ x : \Im(V(x))f(x) \neq b f(x)\}$$
has Lebesgue measure zero. Moreover, the continuity of $f$ implies that $\{ x: f(x) \neq 0\}$ is open (and non-empty), so we obtain from Theorem \ref{thm:last} (applied to $c+id$) that
$$ B:= \{ x  : f(x) \neq 0 \text{ and } \Im(V(x))= d \}$$
has non-zero Lebesgue measure. On the other hand, since $b \neq d$ we have $B \subset A$. This leads to a contradiction.
\end{proof}
\begin{cor}
Let $\nu=1$ and assume (A1)-(A2). Then all boundary eigenvalues of $H$ (if any) will have the same imaginary part.
\end{cor}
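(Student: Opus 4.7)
The strategy is to reduce the claim to the immediately preceding theorem by showing that, in dimension one, every eigenfunction of $H$ automatically admits a continuous representative. The preceding theorem is stated under (A1)--(A3), but assumption (A3) is introduced explicitly only for $\nu \geq 2$ and is vacuous for $\nu=1$; a quick inspection of the chain (preceding theorem $\to$ Theorem \ref{thm:last} $\to$ Lemma \ref{lem:unique} $\to$ Theorem \ref{unique}) confirms that the 1D case goes through under just (A1)--(A2), since in this dimension the relevant hypothesis of Theorem \ref{unique} only demands $\Re(V) \in L^1_{loc}(\mr)$, which is automatic from (A1).

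The main step is then the continuity statement. Let $f$ be any eigenfunction of $H$. From the facts about the form recalled before Lemma \ref{lem:form} we have $f \in \dom(H) \subset \dom(\mathcal{E}) = H^{1,2}(\mr)$. By the one-dimensional Sobolev embedding, $H^{1,2}(\mr) \hookrightarrow C_0(\mr)$, so $f$ has a continuous representative. In particular, if $H$ has any boundary eigenvalue at all, then it has one with a continuous eigenfunction, and the conclusion follows directly from the preceding theorem; if $H$ has no boundary eigenvalues, the statement is trivially true.

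I do not foresee any genuine obstacle: the whole argument is a one-line invocation of Sobolev embedding in $\nu=1$, together with the bookkeeping remark that (A3) plays no role in this regime. The only subtlety worth double-checking is that the preceding theorem (via Theorem \ref{thm:last} and Lemma \ref{lem:unique}) really is available under (A1)--(A2) alone when $\nu=1$, which is clear from the statement of (A3) being restricted to $\nu \geq 2$.
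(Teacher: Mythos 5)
Your proof is correct and is essentially identical to the paper's, which also invokes $\dom(H)\subset H^{1,2}(\mr)\subset C_0(\mr)$ to conclude that eigenfunctions are continuous and then applies the preceding theorem. Your extra remark that (A3) is vacuous for $\nu=1$ is accurate bookkeeping that the paper leaves implicit.
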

\begin{proof}
  This follows from the fact that $\dom(H) \subset H^{1,2}(\mr) \subset C_0(\mr)$.
\end{proof}
\begin{cor} 
  Assume (A1)-(A3). If all eigenfunctions of $-\Delta \dotplus \Re(V)$ are continuous, then all boundary eigenvalues of $H$ (if any) will have the same imaginary part.
\end{cor}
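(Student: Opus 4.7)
The plan is to reduce the statement to the previous theorem by observing that every eigenfunction of $H$ corresponding to a boundary eigenvalue is automatically an eigenfunction of the selfadjoint operator $-\Delta \dotplus \Re(V)$, and hence continuous under our hypothesis.

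More precisely, suppose $H$ has at least one boundary eigenvalue $\lambda = a+ib$ with corresponding eigenfunction $f \in \dom(H)$. First I would invoke Proposition \ref{prop:2}, which under assumptions (A1) and (A2) yields
\begin{equation*}
(-\Delta \dotplus \Re(V))f = a f \quad \text{and} \quad M_{\Im(V)} f = b f.
\end{equation*}
The first of these identities says precisely that $f$ is an eigenfunction of the selfadjoint operator $-\Delta \dotplus \Re(V)$. By the standing hypothesis of the corollary, every such eigenfunction is continuous, so $f$ has a continuous representative.

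Consequently, $H$ has a boundary eigenvalue with a continuous eigenfunction, and the previous theorem applies directly: all boundary eigenvalues of $H$ must share the same imaginary part. There is really no obstacle here beyond tracing the chain of implications; the substantive work has been carried out in Proposition \ref{prop:2} and in the preceding theorem, and the corollary is just the observation that continuity of the eigenfunction (the hypothesis needed to trigger that theorem) is provided for free once we know $f$ solves the real eigenvalue equation $(-\Delta \dotplus \Re(V))f = af$.
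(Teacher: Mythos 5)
Your argument is correct and matches the paper's intended route: Proposition \ref{prop:2} shows that any eigenfunction for a boundary eigenvalue is also an eigenfunction of $-\Delta \dotplus \Re(V)$, hence continuous by hypothesis, so the preceding theorem applies. The paper simply states ``This is a consequence of Proposition \ref{prop:2},'' and you have unpacked exactly that chain of reasoning.
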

\begin{proof}
  This is a consequence of Proposition \ref{prop:2}.
\end{proof}
\begin{rem}
  We do not know whether (A1) and (A3) are sufficient for the continuity of the eigenfunctions of $-\Delta \dotplus \Re(V)$ (if $\nu \geq 2$).  However, we note that some of the sufficient conditions for (A1) (like $|V|$ being in the Kato-class, see \cite{MR670130}) do imply this fact, so for these potentials all boundary eigenvalues of $H$ must have the same imaginary part.
\end{rem}
Let us conclude this section by (re-)stating the two main open problems which have arisen in the above considerations:
\begin{enumerate}
\item[(i)] Given (A1)-(A3), can $H=-\Delta \dotplus V$ have boundary eigenvalues if $\Im(V)$ is not constant almost everywhere?
\item[(ii)] If the answer to the first problem is yes, is it possible for $H$ to have boundary eigenvalues with different imaginary parts?
\end{enumerate}

\section{Acknowledgments}
 
I would like to thank  M. Demuth and, especially, G. Katriel for a variety of helpful remarks. Moreover, my thanks go to an anonymous referee whose comments and suggestions 
helped to improve this article.

\bibliography{bibliography}

\def\cprime{$'$}
\begin{thebibliography}{10}

\bibitem{MR1819914}
A.~A. Abramov, A.~Aslanyan, and E.~B. Davies.
\newblock Bounds on complex eigenvalues and resonances.
\newblock {\em J. Phys. A}, 34(1):57--72, 2001.

\bibitem{amrein}
W.~O. Amrein, A.-M. Berthier, and V.~Georgescu.
\newblock {$L^{p}$}-inequalities for the {L}aplacian and unique continuation.
\newblock {\em Ann. Inst. Fourier (Grenoble)}, 31(3):vii, 153--168, 1981.

\bibitem{MR2481997}
A.~Borichev, L.~Golinskii, and S.~Kupin.
\newblock A {B}laschke-type condition and its application to complex {J}acobi
  matrices.
\newblock {\em Bull. Lond. Math. Soc.}, 41(1):117--123, 2009.

\bibitem{Damanik05}
D.~Damanik, R.~Killip, and B.~Simon.
\newblock Schr\"odinger operators with few bound states.
\newblock {\em Comm. Math. Phys.}, 258(3):741--750, 2005.

\bibitem{b_Davies}
E.~B. Davies.
\newblock {\em Linear operators and their spectra}, volume 106 of {\em
  Cambridge Studies in Advanced Mathematics}.
\newblock Cambridge University Press, Cambridge, 2007.

\bibitem{MR2559715}
M.~Demuth, M.~Hansmann, and G.~Katriel.
\newblock On the discrete spectrum of non-selfadjoint operators.
\newblock {\em J. Funct. Anal.}, 257(9):2742--2759, 2009.

\bibitem{MR2163601}
I.~Egorova and L.~Golinskii.
\newblock On the location of the discrete spectrum for complex {J}acobi
  matrices.
\newblock {\em Proc. Amer. Math. Soc.}, 133(12):3635--3641 (electronic), 2005.

\bibitem{MR0072928}
P.~Erd{\H{o}}s and J.~C. Oxtoby.
\newblock Partitions of the plane into sets having positive measure in every
  non-null measurable product set.
\newblock {\em Trans. Amer. Math. Soc.}, 79:91--102, 1955.

\bibitem{Frank11}
R.~L. Frank.
\newblock Eigenvalue bounds for {S}chr\"odinger operators with complex
  potentials.
\newblock {\em Bull. Lond. Math. Soc.}, 43(4):745--750, 2011.

\bibitem{MR2260376}
R.~L. Frank, A.~Laptev, E.~H. Lieb, and R.~Seiringer.
\newblock Lieb-{T}hirring inequalities for {S}chr\"odinger operators with
  complex-valued potentials.
\newblock {\em Lett. Math. Phys.}, 77(3):309--316, 2006.

\bibitem{FLS11}
R.~L. Frank, A.~Laptev, and R.~Seiringer.
\newblock A sharp bound on eigenvalues of {S}chr\"odinger operators on the
  halfline with complex-valued potentials.
\newblock In {\em Spectral Theory and Analysis}, volume 214 of {\em Oper.
  Theory Adv. Appl.}, pages 39--44. Birkh\"auser Verlag, Basel, 2011.

\bibitem{MR2367876}
L.~Golinskii and S.~Kupin.
\newblock Lieb-{T}hirring bounds for complex {J}acobi matrices.
\newblock {\em Lett. Math. Phys.}, 82(1):79--90, 2007.

\bibitem{Hansmann11}
M.~Hansmann.
\newblock An eigenvalue estimate and its application to non-selfadjoint
  {J}acobi and {S}chr\"odinger operators.
\newblock {\em Lett. Math. Phys.}, 98(1):79--95, 2011.

\bibitem{HK10}
M.~Hansmann and G.~Katriel.
\newblock {Inequalities for the eigenvalues of non-selfadjoint {J}acobi
  operators.}
\newblock {\em Complex Anal. Oper. Theory}, 5(1):197--218, 2011.

\bibitem{MR0200725}
S.~Hildebrandt.
\newblock \"{U}ber den numerischen {W}ertebereich eines {O}perators.
\newblock {\em Math. Ann.}, 163:230--247, 1966.

\bibitem{MR1996773}
L.~H{\"o}rmander.
\newblock {\em The analysis of linear partial differential operators. {I}}.
\newblock Classics in Mathematics. Springer-Verlag, Berlin, 2003.
\newblock Distribution theory and Fourier analysis, Reprint of the second
  (1990) edition [Springer, Berlin; MR1065993 (91m:35001a)].

\bibitem{jerison}
D.~Jerison and C.~E. Kenig.
\newblock Unique continuation and absence of positive eigenvalues for
  {S}chr\"odinger operators.
\newblock {\em Ann. of Math. (2)}, 121(3):463--494, 1985.
\newblock With an appendix by E. M. Stein.

\bibitem{kato}
T.~Kato.
\newblock {\em Perturbation theory for linear operators}.
\newblock Classics in Mathematics. Springer-Verlag, Berlin, 1995.
\newblock Reprint of the 1980 edition.

\bibitem{MR2540070}
A.~Laptev and O.~Safronov.
\newblock Eigenvalue estimates for {S}chr\"odinger operators with complex
  potentials.
\newblock {\em Comm. Math. Phys.}, 292(1):29--54, 2009.

\bibitem{MR2198326}
V.~G. Maz'ya and I.~E. Verbitsky.
\newblock Infinitesimal form boundedness and {T}rudinger's subordination for
  the {S}chr\"odinger operator.
\newblock {\em Invent. Math.}, 162(1):81--136, 2005.

\bibitem{MR2651940}
O.~Safronov.
\newblock Estimates for eigenvalues of the {S}chr\"odinger operator with a
  complex potential.
\newblock {\em Bull. Lond. Math. Soc.}, 42(3):452--456, 2010.

\bibitem{MR2596049}
O.~Safronov.
\newblock On a sum rule for {S}chr\"odinger operators with complex potentials.
\newblock {\em Proc. Amer. Math. Soc.}, 138(6):2107--2112, 2010.

\bibitem{MR670130}
B.~Simon.
\newblock Schr\"odinger semigroups.
\newblock {\em Bull. Amer. Math. Soc. (N.S.)}, 7(3):447--526, 1982.

\bibitem{MR1711536}
G.~Teschl.
\newblock {\em Jacobi operators and completely integrable nonlinear lattices},
  volume~72 of {\em Mathematical Surveys and Monographs}.
\newblock American Mathematical Society, Providence, RI, 2000.

\bibitem{MR1443208}
R.~Webster.
\newblock {\em Convexity}.
\newblock Oxford Science Publications. The Clarendon Press Oxford University
  Press, New York, 1994.

\end{thebibliography}
\bibliographystyle{plain}

\end{document}